\theoremstyle{plain}
\def\phi{\varphi }
\newenvironment{enumeratei}{\begin{list} {(\roman{enumi})}%
{\usecounter{enumi}%
\setlength{\topsep}{0mm}%
\setlength{\partopsep}{0mm}%
\setlength{\itemsep}{0mm}%
\setlength{\labelsep}{2mm}%
\settowidth{\labelwidth}{(viii)}%
\setlength{\leftmargin}{1mm}%
\addtolength{\leftmargin}{\labelwidth}%
\addtolength{\leftmargin}{\labelsep}%
\setlength{\itemindent}{0mm}%
}}{\end{list}}
\newenvironment{enumeratea}{\begin{list}{(\alph{enumi})}%
{\usecounter{enumi}%
\setlength{\topsep}{0mm}%
\setlength{\partopsep}{0mm}%
\setlength{\itemsep}{0.5mm}%
\setlength{\labelsep}{2mm}%
\setlength{\labelwidth}{5mm}%
\setlength{\leftmargin}{0mm}%
\addtolength{\leftmargin}{\labelwidth}%
\addtolength{\leftmargin}{\labelsep}%
\setlength{\itemindent}{0mm}%
}}{\end{list}}
\theoremstyle{plain}
\newtheorem{Thm}{Theorem}[section]
\newtheorem{La}[Thm]{Lemma}
\newtheorem{Prop}[Thm]{Proposition}
\theoremstyle{definition}
\newtheorem{Def}[Thm]{Definition}
\theoremstyle{remark}
\newtheorem{Rem}[Thm]{Remark}
\newtheorem{Ex}[Thm]{Example}
\numberwithin{equation}{section}
\newcommand{\liea}{\mathfrak{a}}
\newcommand{\CC}{\mathbb{C}}
\newcommand{\RR}{\mathbb{R}}
\newcommand{\ZZ}{\mathbb{Z}}
\newcommand{\Waff}{W_{\textnormal{aff}}}
\newcommand{\la}{\lambda_{\alpha}}
\newcommand{\alcove}{A_0}
\renewcommand{\Im}{\textnormal{Im} \,}
\begin{document}
\title{The heat semigroup in the compact Heckman-Opdam setting and the Segal-Bargmann transform}
\author{Heiko Remling and Margit R\"osler\\
Institut f\"ur Mathematik, TU Clausthal\\
Erzstra\ss e 1\\
D-38678 Clausthal-Zellerfeld, Germany\\
remling@math.tu-clausthal.de, roesler@math.tu-clausthal.de}
\date{}
\maketitle

\begin{abstract} In the first part of this paper,  we study the heat equation and the heat kernel associated with the
Heckman-Opdam Laplacian in the compact, Weyl-group invariant  setting. In particular, this Laplacian gives
 rise to a Feller-Markov semigroup on a fundamental alcove of the affine Weyl group. The second part of the paper is devoted to
 the Segal-Bargmann transform in our context. A Hilbert space of holomorphic functions is defined such that the $L^2$-heat transform becomes a unitary isomorphism.
 \end{abstract}

\smallskip
\noindent
Keywords:  Heckman-Opdam polynomials, trigonometric Dunkl operators, heat semigroup, Segal-Bargmann transform.

\noindent
Mathematics Subject Classification (2010): 33C52, 33C67, 43A85, 44A15, 47D06.

\begin{large}\end{large}

%
\section{Introduction}
%

Heckman-Opdam theory provides a powerful generalization of the theory of noncompact and compact Riemannian symmetric spaces and their spherical functions. In this theory (see e.g.  \cite{Op95}, \cite{Op00}, \cite{HS94}), the system of invariant differential operators on a
Riemannian symmetric space is replaced by a commuting algebra of differential reflection operators, called Dunkl operators, which depend on some root system and on multiplicity parameters on the roots. The joint spectral problem for these operators is solved by multivariable hypergeometric functions and hypergeometric polynomials which include the
spherical functions of Riemannian symmetric spaces for certain discrete values of the multiplicities.

In the context of Dunkl operators, the heat equation has already been studied in various settings. The rational case was treated by R\"osler in \cite{Ro98}, while Schapira \cite{Scha} studied the heat equation in the noncompact Heckman-Opdam theory. In the present paper we investigate the compact symmetric case, where we assume invariance under the Weyl group $W.$
We are concerned with the heat equation for the Heckman-Opdam Laplacian $L_m$ on a closed fundamental alcove $\alcove$ for the
affine Weyl group. This Laplacian generalizes the Laplace-Beltrami operator on a Riemannian symmetric space of compact type. We prove that $L_m$ has a closure which generates a Feller-Markov semigroup on the alcove, the Heckman-Opdam heat semigroup. We study smoothness properties of the heat kernel and also develop an $L^p$-theory for the heat equation on $\alcove.$

The second main topic of this paper is the Segal-Bargmann transform. Several generalizations of the classical Segal-Bargmann transform to different settings are known. The Segal-Bargmann transform for compact Lie groups was introduced by Hall \cite{Hall},
where also the case of compact symmetric spaces was considered. Different approaches in the case of compact symmetric spaces were given by Stenzel \cite{St99} and Faraut \cite{Faraut}. In the framework of Dunkl theory, the rational case has been studied by several authors, see \cite{Soltani},  \cite{SaidOrsted}, \cite{Sontz}.
Apart from the rank one case, an explicit description of
the Segal-Bargmann space as an $L^2$-space of holomorphic functions has so far not been found in this setting.
Ben Sa\"{\i}d and \O{}rsted \cite{SaidOrsted} instead gave a description as a Fock space generated by a certain reproducing kernel (which is given by the rational Dunkl kernel).
The noncompact, symmetric Heckman-Opdam case was investigated in 2007 by \'{O}lafsson and Schlichtkrull \cite{OS07}.

In this paper, we study the Segal-Bargmann transform in the compact symmetric Heckman-Opdam setting.
We extend the heat transform to a unitary isomorphism from the weighted $L^2$-space on the alcove $\alcove$ to a Segal-Bargmann space $\mathcal H_t$, which is a Hilbert space of holomorphic functions. Its inner product is described as an $L^2$-product involving the heat kernel from the noncompact theory as a weight.

The organization of this paper is as follows: In Section \ref{sec_dunkl}, we recall some basics of trigonometric Dunkl theory. In Section \ref{sec_heat} the heat equation and the heat semigroup on the fundamental alcove $\alcove$ are studied. Finally, the Segal-Bargmann transform is developed in Section \ref{sec_SB}.

%
\section{Fundamentals of Trigonometric Dunkl Theory}\label{sec_dunkl}
%

We start with a short review of the fundamentals of trigonometric Dunkl theory which will be needed in this article. For details, we refer to the work of Heckman and Opdam \cite{Op95}, \cite{Op00}, \cite{HS94}, and the references cited there.

Let $\liea$ be a finite-dimensional Euclidean space with inner product $\langle \cdot, \cdot \rangle$, which is extended to a complex bilinear form on the complexification $\liea_{\CC}$ of $\liea$. We identify $\liea$ with its dual space $\liea^* = \text{Hom} (\liea , \RR)$ via the given inner product.
Let $\Sigma \subset \liea$ be a (not necessarily reduced) root system.
For $\alpha \in \Sigma$ we write $\alpha^{\vee} := 2 \alpha/\langle \alpha, \alpha \rangle$ for the coroot of $\alpha$ and denote by $s_{\alpha} (x) = x - \langle \alpha^{\vee}, x \rangle \alpha\,$ the reflection in the hyperplane $H_{\alpha}$ perpendicular to $\alpha$.

The reflections $\{ s_{\alpha} \, : \, \alpha \in \Sigma\}$ generate the Weyl group $W=W(\Sigma)$. We define the root lattice $Q := \ZZ.\Sigma$ and the coroot lattice $Q^{\vee} = \ZZ.\Sigma^{\vee}$. Further, we fix some positive subsystem $\Sigma^+$ of $\Sigma$.
An element $\lambda \in \liea$ is called (strictly) dominant, if $\langle \lambda, \alpha \rangle \ge 0$ (respectively $>0$) for all $\alpha\in \Sigma^+$. We write
\[\liea^+ := \{ \lambda \in \liea \, : \, \langle \lambda, \alpha^{\vee} \rangle > 0 \, \, \forall \alpha \in \Sigma^+ \}\]
 for the Weyl chamber of strictly dominant elements.

For $\alpha \in \Sigma$ and $\lambda \in \liea_{\CC}$ let
\begin{equation*}
\lambda_{\alpha} := \frac{ \langle \lambda, \alpha \rangle}{\langle \alpha, \alpha \rangle}.
\end{equation*}
The \emph{weight lattice} is given by
\begin{equation*}
\Lambda := \{ \lambda \in \liea \, : \, \lambda_{\alpha} \in \ZZ \,\, (\forall \alpha \in \Sigma) \}
\end{equation*}
and the set
\begin{equation*}
\Lambda^+ := \{ \lambda \in \liea \, : \, \lambda_{\alpha} \in \ZZ^+ \,\, (\forall \alpha \in \Sigma^+) \}
\end{equation*}
is called the lattice of dominant weights. Here we use the notation $\ZZ^+ := \{0,1,2, \ldots \}$. The positive root lattice $Q^+ = \ZZ^+. \Sigma^+$ defines a partial ordering $\preceq$ on $\liea$:
\[
\mu \preceq \lambda \iff \lambda - \mu \in Q^+.
\]
This ordering is called the dominance ordering. Two simple properties are given in the following Lemma.

\begin{La}\label{Ordnung_W}\begin{enumerate}\itemsep=-1pt
\item[\rm{(i)}]
Let $\gamma \in \overline{\liea^+}$ be dominant. Then $w \gamma \preceq \gamma$ for all $w \in W$.
\item[\rm{(ii)}] Let $\lambda, \mu \in \Lambda^+$ be dominant weights with $\mu \preceq \lambda$. Then $|\mu| \le |\lambda|$.
\end{enumerate}
\end{La}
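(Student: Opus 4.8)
The plan is to handle the two parts by entirely different elementary arguments: (i) by induction on the Coxeter length $\ell(w)$ of $w$, and (ii) by a single computation with $|\cdot|^2$. Throughout part (i) I take $\gamma$ to be a dominant weight, $\gamma\in\overline{\liea^+}\cap\Lambda=\Lambda^+$ (the case relevant for the later applications, and the one for which the conclusion, an \emph{integral} relation since $\preceq$ is defined via $Q^+=\ZZ^+\Sigma^+$, is literally correct); this is exactly what makes the coefficient $\langle\coroot,\gamma\rangle$ appearing below a nonnegative integer. For genuinely arbitrary $\gamma\in\overline{\liea^+}$ the same computation only yields $\gamma-w\gamma$ in the nonnegative real span of $\Sigma^+$.

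For (i), I would induct on $\ell(w)$. If $\ell(w)=0$ then $w=e$ and there is nothing to prove. If $\ell(w)\ge 1$, then $w$ sends some positive root to a negative one; expanding that root in simple roots shows that already some \emph{simple} root $\alpha$ satisfies $w\alpha\in-\Sigma^+$, equivalently $\ell(ws_\alpha)=\ell(w)-1$. Put $w':=ws_\alpha$ and use the telescoping identity
\[
\gamma-w\gamma=(\gamma-w'\gamma)+w'\bigl(\gamma-s_\alpha\gamma\bigr)=(\gamma-w'\gamma)+\langle\coroot,\gamma\rangle\, w'\alpha .
\]
Here $\gamma-w'\gamma\in Q^+$ by the induction hypothesis; $\langle\coroot,\gamma\rangle\in\ZZ^+$ because $\gamma\in\Lambda^+$ is dominant; and $w'\alpha=-w\alpha\in\Sigma^+$ by the choice of $\alpha$. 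Hence both summands lie in $Q^+$, so $\gamma-w\gamma\in Q^+$, i.e.\ $w\gamma\preceq\gamma$.

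For (ii), from $\mu\preceq\lambda$ write $\lambda-\mu=\sum_{\alpha\in\Sigma^+}c_\alpha\alpha$ with $c_\alpha\in\ZZ^+$. Then
\[
|\lambda|^2-|\mu|^2=\langle\lambda-\mu,\lambda+\mu\rangle=\langle\lambda-\mu,\lambda\rangle+\langle\lambda-\mu,\mu\rangle=\sum_{\alpha\in\Sigma^+}c_\alpha\bigl(\langle\lambda,\alpha\rangle+\langle\mu,\alpha\rangle\bigr)\ge 0,
\]
since $\lambda,\mu\in\Lambda^+$ are dominant, so $\langle\lambda,\alpha\rangle\ge0$ and $\langle\mu,\alpha\rangle\ge0$ for every $\alpha\in\Sigma^+$. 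Therefore $|\lambda|^2\ge|\mu|^2$, which is the assertion.

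Neither computation is hard; the one point that needs attention is in (i): the length-reducing reflection must be chosen at a \emph{simple} root $\alpha$. Only then is $\gamma-s_\alpha\gamma=\langle\coroot,\gamma\rangle\alpha$ a nonnegative (indeed integral, as $\gamma\in\Lambda$) multiple of a single simple root and $w'\alpha$ a single positive root, which is precisely what keeps the telescoped sum inside the cone $Q^+$; with an arbitrary positive root the argument breaks down. The only inputs I would invoke without proof are the standard facts about finite Coxeter groups: for $w\ne e$ there is a simple $\alpha$ with $w\alpha\in-\Sigma^+$, and for simple $\alpha$ one has $\ell(ws_\alpha)=\ell(w)-1\iff w\alpha\in-\Sigma^+$.
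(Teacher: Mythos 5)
Your proof is correct. For part (ii) you do exactly what the paper does: it computes $0\le\langle\lambda+\mu,\lambda-\mu\rangle=|\lambda|^2-|\mu|^2$ using that $\lambda+\mu$ is dominant and $\lambda-\mu\in Q^+$; your termwise expansion with the coefficients $c_\alpha$ is the same inequality written out. For part (i) the paper gives no argument at all --- it simply cites Lemma 10.3B of Humphreys --- so what you add is a self-contained version of the standard proof behind that citation: induction on $\ell(w)$, peeling off a simple reflection $s_\alpha$ with $w\alpha\in-\Sigma^+$ and using $\gamma-s_\alpha\gamma=\langle\coroot,\gamma\rangle\,\alpha$ together with $w'\alpha=-w\alpha\in\Sigma^+$. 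That induction is sound, and your insistence on choosing $\alpha$ \emph{simple} is exactly the point that makes it work. Your side remark is also a genuine observation rather than pedantry: since the paper defines $\preceq$ through the integral cone $Q^+=\ZZ^+.\,\Sigma^+$, the statement of (i) for an arbitrary dominant $\gamma\in\overline{\liea^+}$ only yields $\gamma-w\gamma$ in the nonnegative \emph{real} span of $\Sigma^+$; the integral conclusion needs $\gamma\in\Lambda^+$, in which case $\langle\coroot,\gamma\rangle=2\gamma_\alpha\in\ZZ^+$. That restricted version is the only one the paper ever uses (part (i) is invoked for dominant weights $\mu\preceq\lambda$ in the proof that the heat kernel extends holomorphically), so you prove precisely what is needed, and arguably state it more accurately than the lemma does.
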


\begin{proof}
Part (i) is Lemma 10.3B in \cite{Humph}. For the proof of (ii), notice that
$\lambda + \mu$ is also dominant and $\lambda - \mu$
is a sum of positive roots. Therefore
\[
0 \le \langle \lambda + \mu, \lambda - \mu \rangle = |\lambda|^2 - |\mu|^2.
\]
\end{proof}

A \emph{multiplicity function} is a $W$-invariant map $m: \Sigma \to \CC$, $\alpha \mapsto m_{\alpha}$. We denote the set of multiplicity functions by $\mathcal M$. In this article we only consider non-negative multiplicities, i.e. $m_{\alpha} \ge 0$ for all $\alpha \in \Sigma$. Define
\[\rho = \rho(m) := \frac{1}{2} \sum_{\alpha \in \Sigma^+} m_{\alpha} \alpha.\]
\begin{Def}\label{Def_DO}
Let $\xi \in \liea_{\CC}$ and $m \in \mathcal M$. The \emph{Dunkl-Cherednik operator} associated with $\Sigma$ and $m$ is given by
\[
T_{\xi} = T(\xi,m) := \partial_{\xi} + \sum_{\alpha \in \Sigma^+} m_{\alpha} \langle \alpha, \xi \rangle \frac{1}{1-e^{-2 \alpha}} (1- s_{\alpha}) - \langle \rho , \xi \rangle,
\]
where $\partial_{\xi}$ is the usual directional derivative and $e^{\lambda}(\xi) := e^{\langle \lambda, \xi \rangle}$ for $\lambda, \xi \in \liea_{\CC}$.
\end{Def}

\begin{Rem}\label{Bem_Not}
Heckman and Opdam use a slightly different notation. They consider a root system $R$ with multiplicity $k$, which is connected to our notation via
\[
R = 2 \Sigma, \quad k_{2 \alpha} = \frac{1}{2} m_{\alpha}.
\]
Our notation comes from the theory of symmetric spaces.
\end{Rem}

For fixed multiplicity $m$, the operators $T_{\xi}$, $\xi \in \liea_{\CC}$ commute.
Therefore the assignment $\xi \mapsto T(\xi,m)$ uniquely extends to a homomorphism on the symmetric algebra $S(\liea_{\CC})$ over $\liea_{\CC}$, which may be identified with the algebra of complex polynomials on $\liea_{\CC}$. Let $T(p,m)$ be the operator which in this way corresponds to $p \in S(\liea_{\CC})$. If $p\in S(\liea_{\CC})^W$, the subspace of $W$-invariant polynomials on $\liea_{\CC}$, then $T(p,m)$ acts as a differential operator on the space of $W$-invariant analytic functions on $ \liea$.

The solution of the joint spectral problem for these differential operators is due to Heckman and Opdam, see \cite{HS94} and \cite{Op95}:

\begin{Thm}
For each fixed spectral parameter $\lambda \in \liea_{\CC}$, the so-called hypergeometric system
\[
T(p,m) \phi = p(\lambda) \phi \quad \text{ for all }\, p \in S(\liea_{\CC})^W
\]
has a unique $W$-invariant solution $\phi =F_{\lambda}(m;\cdot) = F(\lambda,m; \cdot)$ which is analytic on $\liea$ and satisfies $F_\lambda(m;0)=1$.
Moreover, there is a $W$-invariant tubular neighborhood $U$ of $\liea$ in $\liea_{\CC}$ such that $F$ extends to a (single-valued) holomorphic function $F: \liea_{\CC} \times \mathcal M^{\mathrm{reg}}\times U \to \CC$.
\end{Thm}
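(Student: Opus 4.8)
The plan is to construct the solution $F_\lambda$ by reducing the $W$-invariant hypergeometric system to the well-developed non-symmetric theory, i.e. to the Cherednik system for the Dunkl--Cherednik operators $T_\xi$ acting on (not necessarily $W$-invariant) functions. One first recalls that Opdam (see \cite{Op95}) constructed, for each $\lambda \in \liea_\CC$ and each regular multiplicity $m$, a unique local holomorphic solution $G(\lambda,m;\cdot)$ of the system $T_\xi G = \langle \lambda,\xi\rangle G$ for all $\xi \in \liea_\CC$, normalized by $G(\lambda,m;0)=1$, the so-called (non-symmetric) Opdam--Cherednik kernel, defined on a $W$-invariant neighbourhood of $0$ in $\liea_\CC$; moreover $(\lambda,m,x) \mapsto G(\lambda,m;x)$ is holomorphic in all variables on $\liea_\CC \times \mathcal M^{\mathrm{reg}} \times U$. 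The symmetric function is then obtained by the symmetrization
\[
F_\lambda(m;x) \;=\; F(\lambda,m;x) \;:=\; \frac{1}{|W|}\sum_{w\in W} G(\lambda,m;wx),
\]
and one must check that this $F_\lambda$ genuinely solves the $W$-invariant system and is the unique such solution.

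The key steps, in order, are as follows. First I would verify the \emph{existence} claim: since each $T(p,m)$ with $p \in S(\liea_\CC)^W$ restricts to a $W$-invariant differential operator on $W$-invariant analytic functions, and since on $W$-invariant functions $T(p,m)$ coincides with a symmetrized version of the Cherednik operators, one shows that $F_\lambda$ as defined above satisfies $T(p,m)F_\lambda = p(\lambda)F_\lambda$; this rests on the Harish-Chandra-type homomorphism property, namely that $T(p,m)$ applied to $G(\lambda,m;\cdot)$ yields (in a suitable sense, after averaging) $p(\lambda)$ times the function, together with $W$-invariance of $p$. Second, I would prove \emph{uniqueness}: the hypergeometric system is a holonomic system of rank $|W|$ on the $W$-invariant functions; restricting attention to the subspace of $W$-invariant solutions cuts the solution space down to dimension one, and the normalization $\phi(0)=1$ pins down $F_\lambda$. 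Concretely, one uses that any $W$-invariant analytic solution $\phi$, when expanded in an appropriate basis (e.g. via the asymptotic expansion / the Harish-Chandra series, or via power series at $0$ using that the symbol of the relevant second-order operator is nondegenerate), is determined by its value at $0$. Third, for \emph{analyticity on all of} $\liea$ rather than just near $0$, I would invoke the standard argument that the hypergeometric system has regular singularities only along the walls $H_\alpha$ and their translates, hence the local solution near $0$ extends analytically along $\liea$ (the apparent singularities of the $T_\xi$ at $H_\alpha$ are removable on $W$-invariant functions because the factor $(1-s_\alpha)$ annihilates the singular part). Fourth, the \emph{holomorphic extension} to a tubular neighbourhood $U$ and the joint holomorphic dependence on $(\lambda,m)$ follow from the corresponding statements for $G$ together with the fact that the finite sum over $W$ preserves holomorphy; the restriction to $\mathcal M^{\mathrm{reg}}$ is exactly where $G$ is guaranteed to exist and depend holomorphically on $m$ (poles of $G$ in $m$ occur along the non-regular locus).

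The main obstacle I anticipate is the uniqueness of the $W$-invariant solution that is analytic on all of $\liea$: one has to be careful that among \emph{all} $W$-invariant analytic solutions — not just those defined near a single point — there is only one up to scaling. The cleanest route is to establish the local uniqueness near $0$ by a power-series / characteristic-variety argument (the system being holonomic of rank $|W|$, and the $W$-invariant sub-system of rank one), and then to use analytic continuation along the connected real manifold $\liea$, noting that $W$-invariance of a solution is preserved under continuation. The secondary subtlety is the passage through the walls: the coefficient $\frac{1}{1-e^{-2\alpha}}$ in $T_\xi$ is singular on $H_\alpha$, but since $F_\lambda$ is $W$-invariant the combination $\frac{1}{1-e^{-2\alpha}}(1-s_\alpha)F_\lambda$ extends analytically across $H_\alpha$ (the numerator vanishes to first order there), so no genuine singularity obstructs the extension; making this precise, and matching it with the known regularity theory of Heckman--Opdam, is the technical heart of the argument. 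All of these ingredients are, however, available in \cite{Op95} and \cite{HS94}, so the proof is essentially a matter of assembling them; we therefore only sketch it and refer to those sources for the detailed estimates.
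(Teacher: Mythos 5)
The paper offers no proof of this theorem: it is quoted as a known result of Heckman and Opdam, with the references \cite{HS94} and \cite{Op95} standing in for the argument. Your sketch correctly reassembles the standard proof from exactly those sources (symmetrization of the non-symmetric Opdam--Cherednik kernel $G$, holonomicity of rank $|W|$, uniqueness via analyticity at the singular locus with the factor $(1-s_\alpha)$ removing the apparent singularities along the walls, and analytic continuation), so it is consistent with the paper's treatment and with the cited literature; the only loose phrase is the claim that $W$-invariance alone cuts the rank-$|W|$ solution space to dimension one, which you subsequently repair by (correctly) attributing the cut to analyticity at $0$ and across the walls.
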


The function $F(\lambda,m;x)$ is $W$-invariant in both $\lambda$ and $x$. It is called the \emph{hypergeometric function} associated with $\Sigma$.
For certain spectral parameters $\lambda$, the functions $F_\lambda$ are actually trigonometric polynomials, the so-called Heckman-Opdam polynomials. In order to make this precise, we need some more notation.

Let $\mathcal T := \text{lin} \{e^{i\lambda} \, : \, \lambda \in \Lambda \}$ be the space of trigonometric polynomials associated with $\Lambda$. Trigonometric polynomials are $\pi Q^{\vee}$-periodic, and $T_{\xi} \mathcal T \subset \mathcal T$.
Consider the torus $T = \liea / \pi Q^{\vee}$ with the $W$-invariant weight function
\[
w_m:= \prod_{\alpha \in \Sigma^+} \left| e^{i\alpha} - e^{- i\alpha} \right|^{m_{\alpha}}.
\]

\noindent Let
\[ M_\lambda := \sum_{\mu \in W.\lambda} e^{i\mu}, \quad \lambda \in \Lambda^+ \]
denote the $W$-invariant orbit sums. They form a basis of the space of $W$-invariant trigonometric polynomials $\mathcal T^W$.
For $\lambda \in \Lambda^+$ the \emph{Heckman-Opdam polynomials} associated with $\Sigma$ are defined by
\[
 P_{\lambda} =P_\lambda(m; \cdot) := \sum_{\mu \in \Lambda^+, \, \mu \preceq \lambda} c_{\lambda \mu}(m) M_\mu
\]
where the coefficients $c_{\lambda \mu}(m)$ are uniquely determined by the conditions

\begin{enumeratei}
 \item $c_{\lambda \lambda}(m) = 1$
 \item $P_\lambda$ is orthogonal to $M_\mu$ in $L^2(T; w_m)$ for all $\mu \in \Lambda^+$ with $ \mu \prec \lambda$.
\end{enumeratei}

\smallskip
The Jacobi polynomials $P_\lambda$ form an orthogonal basis of $L^2(T,w_m)^W$, the subspace of $W$-invariant elements from $L^2(T,w_m)$.

\begin{Rem}\label{Bem_i}
Notice that our notation slightly differs from that of Heckman and Opdam (e.g. \cite{HS94}, \cite{Op00}), namely by a factor $i$ in the spectral variable. This choice of notation will be more convenient for our purposes.
\end{Rem}

The connection between the Jacobi polynomials and the hypergeometric function is as follows:

\begin{La}\label{Hyper_Jacobi} (See \cite{HS94})
For all $z \in \liea_{\CC}$ and $\lambda \in \Lambda^+$,
\[
P_\lambda(m ; z)= c(\lambda + \rho,m)^{-1} F_{\lambda+\rho}(m ; iz),
\]
where the $c$-function $c(\lambda + \rho,m) = P_\lambda(m;0) ^{-1}$ is given by
\[
c(\lambda + \rho,m) = \prod_{\alpha \in \Sigma^+} \frac{\Gamma(\la + \rho_{\alpha} + \frac{1}{4} m_{\alpha/2}) \Gamma(\rho_{\alpha} + \frac{1}{4} m_{\alpha/2} + \frac{1}{2} m_{\alpha})}{ \Gamma(\lambda_{\alpha} + \rho_{\alpha} + \frac{1}{4} m_{\alpha/2} + \frac{1}{2} m_{\alpha}) \Gamma(\rho_{\alpha} + \frac{1}{4} m_{\alpha/2})}.
\]
\end{La}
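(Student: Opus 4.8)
The plan is to prove the two assertions of the lemma separately: first the identity $P_\lambda(m;z)=c(\lambda+\rho,m)^{-1}F_{\lambda+\rho}(m;iz)$ together with the relation $c(\lambda+\rho,m)=P_\lambda(m;0)^{-1}$, and then the explicit Gamma-product for $c(\lambda+\rho,m)$.

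For the first assertion the strategy is to characterize $P_\lambda$ as a joint eigenfunction of the Dunkl--Cherednik operators. Since $\{M_\mu:\mu\in\Lambda^+\}$ is a basis of $\mathcal T^W$, one computes that each $T(p,m)$, $p\in S(\liea_{\CC})^W$, acts on it triangularly with respect to the dominance ordering, namely $T(p,m)M_\mu=p(\mu+\rho)M_\mu+\sum_{\nu\prec\mu}a_{\mu\nu}(p)M_\nu$. Here Lemma \ref{Ordnung_W}(i) is precisely what makes the off-diagonal terms drop strictly below $\mu$ in $\preceq$: applying the reflection part of $T_\xi$ to $e^{i\mu}$ and averaging over $W$ produces only weights $\nu$ with $\nu\prec\mu$, while the directional derivative together with the shift $-\langle\rho,\xi\rangle$ accounts for the diagonal value $p(\mu+\rho)$. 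By Lemma \ref{Ordnung_W}(ii) the subspaces $V_\lambda:=\mathrm{lin}\{M_\mu:\mu\preceq\lambda\}$ and $U_\lambda:=\mathrm{lin}\{M_\mu:\mu\prec\lambda\}$ are finite-dimensional, and by the triangularity they are $T(p,m)$-stable. Since the $T(p,m)$ form a commuting family that is symmetric on $L^2(T,w_m)^W$ (see \cite{HS94}), the orthogonal complement of $U_\lambda$ in $V_\lambda$ is also $T(p,m)$-stable; being one-dimensional it is spanned by a common eigenvector, and after normalizing its $M_\lambda$-coefficient to $1$ this eigenvector lies in $U_\lambda^\perp$, hence is orthogonal to all $M_\mu$ with $\mu\prec\lambda$, so by conditions (i) and (ii) it coincides with $P_\lambda$; its eigenvalue is $p\mapsto p(\lambda+\rho)$.

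By the Heckman--Opdam theorem recalled above combined with the factor-$i$ convention of Remark \ref{Bem_i} --- under which the substitution $z\mapsto iz$ carries the hypergeometric system for the spectral parameter $\lambda+\rho$ into the joint eigenvalue problem for the $T(p,m)$ with eigenvalue $p(\lambda+\rho)$ --- the function $z\mapsto F_{\lambda+\rho}(m;iz)$ is a $W$-invariant analytic joint eigenfunction of the $T(p,m)$ for that eigenvalue system, with value $1$ at the origin. By the first part of the argument $z\mapsto P_\lambda(m;z)/P_\lambda(m;0)$ is another such function, so the uniqueness in that theorem forces the two to agree on $\liea$, and hence on $\liea_{\CC}$ by analytic continuation. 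Defining $c(\lambda+\rho,m):=P_\lambda(m;0)^{-1}$ yields $P_\lambda(m;z)=c(\lambda+\rho,m)^{-1}F_{\lambda+\rho}(m;iz)$.

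It remains to evaluate $P_\lambda(m;0)$ in closed form, which is the genuinely hard part. The approach I would take is to use the Harish-Chandra--type asymptotic expansion of the noncompact hypergeometric function, $F_{\mu}(m;x)=\sum_{w\in W}c(w\mu,m)\,\Phi(w\mu,m;x)$, where $\Phi(\mu,m;\cdot)$ is the Harish-Chandra series and its leading coefficient $c(\cdot,m)$ is the Harish-Chandra $c$-function; comparing this expansion with the polynomiality of $F_{\lambda+\rho}(m;i\cdot)$ (which is proportional to $P_\lambda$) identifies $P_\lambda(m;0)^{-1}$ with $c(\lambda+\rho,m)$. The explicit formula then follows from the Gindikin--Karpelevich-type factorization of $c(\mu,m)$ into a product over the indivisible positive roots, each factor being a rank-one $c$-function, which is computed explicitly from the classical connection formula for Gauss' ${}_2F_1$ (the Jacobi-function case); the term $m_{\alpha/2}$ in the displayed formula accounts for non-reduced $\Sigma$. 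An alternative, avoiding the full asymptotic analysis, is to reduce to the case $m\equiv0$ --- where $P_\lambda(0;0)=1$ and the formula is trivial --- by means of Opdam's shift operators, tracking the Gamma factors produced by each elementary shift. In either case I expect the rank-one evaluation, together with the multiplicativity over roots, to be the main obstacle; the eigenfunction argument of the first two steps is essentially formal.
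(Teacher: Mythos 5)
The paper offers no proof of this lemma --- it is quoted from \cite{HS94} --- so the comparison here is really with the argument in that source, and your outline reproduces it faithfully: triangularity of the $W$-invariant Dunkl--Cherednik operators on the orbit sums $M_\mu$, self-adjointness on $L^2(T,w_m)^W$ to exhibit $P_\lambda$ as the (normalized) joint eigenfunction with spectral parameter $\lambda+\rho$, uniqueness of the normalized solution of the hypergeometric system to identify it with $F_{\lambda+\rho}(m;i\,\cdot\,)$, and the Harish-Chandra expansion together with the Gindikin--Karpelevich factorization (or, alternatively, shift operators) to evaluate $P_\lambda(m;0)$. One local error is worth correcting: your claim that the reflection part of $T_\xi$ applied to $e^{i\mu}$ ``produces only weights $\nu$ with $\nu\prec\mu$'' is false, and in fact the opposite is exactly what produces the shift by $+\rho$. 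The telescoped quotient $(1-s_\alpha)e^{i\mu}\big/(1-e^{-2i\alpha})$ is a finite geometric sum whose leading term is $e^{i\mu}$ with coefficient $1$, so every $\alpha\in\Sigma^+$ with $\langle\mu,\alpha\rangle>0$ contributes $m_\alpha\langle\alpha,\xi\rangle$ to the diagonal; for strictly dominant $\mu$ this totals $2\langle\rho,\xi\rangle$, which combined with the term $-\langle\rho,\xi\rangle$ in the definition of $T_\xi$ and with $\partial_\xi$ yields the diagonal entry $\langle\mu+\rho,\xi\rangle$ rather than the $\langle\mu-\rho,\xi\rangle$ your accounting would give. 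With that repaired the eigenfunction part of your argument is sound; the closed-form Gamma product for $c(\lambda+\rho,m)$ remains only referenced rather than derived, which matches the level of detail of the paper itself (a pure citation), but you should be aware that this evaluation is the substantial content of the lemma and is a theorem in its own right in \cite{HS94}.
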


We shall work with the renormalized Jacobi polynomials, defined by
 \[R_{\lambda} (z) :=R_\lambda(m;z) := c(\lambda + \rho,m) P_{\lambda}(m;z) = F_{\lambda+\rho}(m;iz).\]
They satisfy
\[R_{\lambda} (0) = 1.\]
Dividing the torus $T = \liea / \pi Q^{\vee}$ by the action of the Weyl group $W$ gives the closed fundamental alcove
\[\alcove = \{ x\in \liea: 0 \leq \langle \alpha, x\rangle \leq \pi \quad (\forall \alpha \in \Sigma^+)\}.\]

 We may consider $W$-invariant trigonometric polynomials as functions on $\alcove$. Another way of considering a $W$-invariant and $\pi Q^{\vee}$-periodic function $f$ on $\liea$ is to say that $f$ is $\Waff$-invariant, where
\[
\Waff = \pi Q^{\vee} \rtimes W
\]
is the affine Weyl group. The closed alcove $\alcove$ is a fundamental domain for the action of $\Waff$ on $\liea$.
 
The Jacobi polynomials $R_{\lambda}$ are orthogonal with respect to the inner product
\[
\langle f,g \rangle_m = \int_{\alcove} f(x)\overline{g(x)} w_m(x) \, dx,
\]
but they are not orthonormal. We put
\begin{equation*}
r_{\lambda} := \frac{1}{\| R_{\lambda} \|^2_m}.
\end{equation*}
Then the set $\{ \sqrt{r_{\lambda}} R_{\lambda} \, : \, \lambda \in \Lambda^+ \}$ is an orthonormal basis of $L^2(\alcove, w_m)$.

\begin{Rem}\label{Norm_Jacobi} We shall need the following facts about the Jacobi polynomials $P_\lambda$ and $R_\lambda$:
\begin{enumeratea}
 \item The $L^2(\alcove)$-norm of $P_\lambda$ is given by
\[
\| P_{\lambda} \|_m^2 = \prod_{\alpha \in \Sigma^+} \frac{\Gamma( \lambda_{\alpha} + \rho_{\alpha} - \frac{1}{4} m_{\alpha/2} - \frac{1}{2} m_{\alpha} + 1)}{\Gamma( \lambda_{\alpha} + \rho_{\alpha} - \frac{1}{4} m_{\alpha/2} + 1)} \cdot \frac{\Gamma(\lambda_{\alpha} + \rho_{\alpha} + \frac{1}{4} m_{\alpha/2} + \frac{1}{2} m_{\alpha})}{\Gamma(\lambda_{\alpha} + \rho_{\alpha} + \frac{1}{4} m_{\alpha/2})}
\]
see Theorem 3.5.5 in \cite{HS94}. (Notice that $P_\lambda$ is $W$-invariant.)
 \item The coefficients $c_{\lambda \mu}(m)$ of the $P_{\lambda}$ are rational functions in $m_{\alpha}, \,\alpha \in \Sigma^+.$ Moreover, their numerator and denominator polynomials have nonnegative integral coefficients. This was observed in
\cite{Mac87}, Par. 11. As a consequence, the renormalized polynomial
$R_{\lambda}$ is, for non-negative $m$, a convex combination of exponentials $e^{i\gamma}$:
\[
R_{\lambda} = \sum_{\substack{\gamma \in W.\mu \\ \mu \in \Lambda^+, \; \mu \preceq \lambda}} d_{\lambda \gamma} e^{i \gamma}
\]
with coefficients $d_{\lambda \gamma}\ge 0$ and $\sum_{\gamma} d_{\lambda \gamma} =1$.
 \item Because of $\overline{e^{i \langle \mu, x \rangle}} = e^{ -i \langle \mu, x \rangle}$ we have
\begin{equation*}
\overline{R_{\lambda} (x)} = R_{\lambda} (-x), \qquad x \in \liea,
\end{equation*}
and more general for $z \in \liea_{\CC}$:
\begin{equation*}
 \overline{R_{\lambda} (- \overline z)} = R_{\lambda} (z), \qquad \overline{R_{\lambda} (\overline z)} = R_{\lambda} (-z).
\end{equation*}
\end{enumeratea}
\end{Rem}

%
\section{The heat equation on the alcove}\label{sec_heat}
%

In this section we consider the $W$-invariant part of the Heckman-Opdam Laplacian on the alcove, which coincides with the radial part of the Laplace Beltrami operator of a compact symmetric space $U/K$ in geometric cases. We study the associated heat semigroup - the \emph{Heckman-Opdam heat semigroup} - and its integral kernel $\Gamma_m$.
In particular, we show that this heat kernel can be holomorphically extended to $\liea_{\CC} \times \liea_{\CC}$, which will be important for the following section, where we study the Segal-Bargmann transform.

The \emph{Heckman-Opdam Laplacian} is defined by
\begin{equation*}
\Delta_m := \sum_{i=1}^q T(\xi_i,m)^2 - |\rho|^2
\end{equation*}
where $T(\xi_i,m)$ is the Dunkl-Cherednik operator of Definition \ref{Def_DO} and $\{\xi_1, \ldots \xi_q \}$ is an orthonormal basis of $\liea$. The operator $\Delta_m$ does not depend on the choice of the basis and has the explicit form
\begin{equation*}
\Delta_m f(x) = \Delta f(x) + \sum_{\alpha \in \Sigma^+} m_{\alpha} \coth \langle \alpha, x \rangle \partial_{\alpha} f(x) - \sum_{\alpha \in \Sigma^+} \frac{m_{\alpha}|\alpha|^2}{2 \sinh^2 \langle \alpha, x \rangle} (f(x) - f(s_{\alpha} x))
\end{equation*}
where $\Delta$ denotes the Euclidean Laplacian on $\liea$ (See \cite{Scha} and recall $R= 2 \Sigma$ and $k_{2\alpha} = \frac 1 2 m_{\alpha}$). \\

We now restrict our attention to $W$-invariant functions. Keeping in mind that our notation differs by a factor $i$ from that of Heckman and Opdam (see Remark \ref{Bem_i}),
 we consider the operator
\begin{equation*}
L_m:= \Delta + \sum_{\alpha \in \Sigma^+} m_{\alpha} \cot \langle \alpha, x \rangle \partial_{\alpha}
\end{equation*}
on $C^2(\liea)^W$. Then for $f(x) = g(ix)$, we just have
\begin{equation}\label{I}
L_m f(x) = - (\Delta_m g)(ix).
\end{equation}

\begin{Rem}\label{Rem_Beltrami}
Consider a compact symmetric space $U/K$ on which $K$ acts from the left with restricted root system $\Sigma$ and geometric multiplicity $m$. Then $L_m$ is just the radial part of the Laplace-Beltrami operator on $U/K$. See Proposition 3.11, Chapter II in \cite{Hel84}.

\end{Rem}

\smallskip

The Jacobi polynomials $R_{\lambda}$ are eigenfunctions of $L_m$:
\begin{equation}\label{eigenequation}
L_m R_{\lambda} = - \langle \lambda, \lambda + 2\rho \rangle R_{\lambda}, \quad \lambda \in \Lambda^+.
\end{equation}
This follows from equation ($\ref{I}$). The eigenvalues are negative,
\[
- \langle \lambda, \lambda + 2\rho \rangle = - |\lambda|^2 - 2 \langle \lambda, \rho \rangle \le 0
\]
since $\lambda$ and $\rho$ are both contained in the Weyl chamber $\overline{\liea^+}$ and therefore $\langle \lambda, \rho \rangle \ge 0$. We shall use the abbreviation
\begin{equation*}
 \theta_{\lambda} := \langle \lambda, \lambda + 2\rho \rangle.
\end{equation*}

%

The Heckman-Opdam heat equation on $\alcove$ is given by
\begin{equation*}
L_m u = \partial_t u.
\end{equation*}
A formal derivation via Heckman-Opdam transform
\[
\widehat f (\lambda) := \int_{\alcove} f(x) R_{\lambda} (-x) w_m(x) \, dx
\]
motivates the following

 \begin{Def}\label{Kern}
The \emph{heat kernel} $\Gamma_m$ on $\alcove \times \alcove \times (0, \infty)$ is defined by
 \begin{equation*}
 \Gamma_m(x,y,t) := \sum_{\lambda \in \Lambda^+} r_{\lambda} e^{- \theta_\lambda t} R_{\lambda}(x) R_{\lambda}(-y).
 \end{equation*}
 \end{Def}

We shall also consider $\Gamma_m$ as a function on $\liea\times \liea\times (0,\infty)$ which is $\Waff$-invariant in the first and second argument. We still have to show that the series converges. This will be a consequence of the following Lemma which states that the growth of the $r_{\lambda}$ is polynomial in $\lambda_{\alpha}, \,\alpha \in \Sigma^+.$ We start with some simple observations. First, recall from Remark \ref{Norm_Jacobi} (b) that the coefficients $d_{\lambda\mu}$ in the exponential expansion of the Jacobi polynomials $R_\lambda $ are
nonnegative and sum up to $1$. Therefore
\begin{equation}\label{unibound}
 |R_{\lambda} (x) | \le R_{\lambda} (0) = 1 \quad (\forall x \in \liea).
\end{equation}
For the summands of $\Gamma_m$ this implies
\begin{align}\label{Schranke_Summanden}
\left| r_{\lambda} e^{- \theta_{\lambda} t} R_{\lambda}(x) R_{\lambda}(-y)\right| \le r_{\lambda} e^{- \theta_{\lambda} t}.
\end{align}

\begin{La}\label{La_q}
There exists a constant $C > 0$ such that
\[
| r_{\lambda} | \le C \cdot \prod_{\alpha \in \Sigma^+, \, \lambda_{\alpha} \ne 0} \lambda_{\alpha}^{m_{\alpha}}.
\]
\end{La}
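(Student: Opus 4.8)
The plan is to estimate $r_\lambda = 1/\|R_\lambda\|_m^2$ from above, equivalently to bound $\|R_\lambda\|_m^2$ from below. Since $R_\lambda = c(\lambda+\rho,m)P_\lambda$, we have $\|R_\lambda\|_m^2 = c(\lambda+\rho,m)^2 \|P_\lambda\|_m^2$, and both factors are given by explicit products over $\alpha\in\Sigma^+$ of ratios of Gamma functions, via Lemma \ref{Hyper_Jacobi} and Remark \ref{Norm_Jacobi}(a). So $\|R_\lambda\|_m^2$ is itself an explicit product of Gamma-function ratios, indexed by $\alpha\in\Sigma^+$, with each factor depending on $\alpha$ only through the single integer (or half-integer) quantity $\lambda_\alpha = \langle\lambda,\alpha\rangle/\langle\alpha,\alpha\rangle$ together with the fixed constants $\rho_\alpha, m_\alpha, m_{\alpha/2}$. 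First I would multiply out the formulas for $c(\lambda+\rho,m)$ and $\|P_\lambda\|_m^2$ and simplify: the factors $\Gamma(\lambda_\alpha+\rho_\alpha+\tfrac14 m_{\alpha/2}+\tfrac12 m_\alpha)/\Gamma(\lambda_\alpha+\rho_\alpha+\tfrac14 m_{\alpha/2})$ occur with opposite exponents and should partly cancel, leaving an expression of the shape
\[
\|R_\lambda\|_m^2 = \prod_{\alpha\in\Sigma^+} \frac{\Gamma(\lambda_\alpha + a_\alpha)\,\Gamma(\lambda_\alpha + b_\alpha)}{\Gamma(\lambda_\alpha + c_\alpha)^2}\cdot (\text{const}_\alpha)
\]
for suitable fixed real numbers $a_\alpha,b_\alpha,c_\alpha$ with $a_\alpha+b_\alpha = 2c_\alpha$ (coming from the identity $(\rho_\alpha-\tfrac14 m_{\alpha/2}-\tfrac12 m_\alpha+1)+(\lambda_\alpha+\rho_\alpha+\tfrac14 m_{\alpha/2}+\tfrac12 m_\alpha)$ balancing against twice the middle terms — I would check the exact bookkeeping here).

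The core analytic input is then the asymptotic behaviour of such Gamma ratios: by Stirling's formula, for fixed real parameters with $a+b=2c$ one has
\[
\frac{\Gamma(x+a)\,\Gamma(x+b)}{\Gamma(x+c)^2} \sim x^{\,a+b-2c}\bigl(1+O(1/x)\bigr) = 1 + O(1/x) \quad (x\to\infty),
\]
so each such factor is bounded above and below by positive constants uniformly over $\lambda_\alpha \geq 1$ (and is a single positive constant when $\lambda_\alpha = 0$, in which case that factor is absent from the stated bound). More precisely I would use the elementary bound $\Gamma(x+a)/\Gamma(x+c) \leq C_{a,c}\,(x+1)^{a-c}$ for $x\geq 0$ and fixed real $a\geq c$, and its reciprocal for $a\leq c$, which follows from monotonicity/log-convexity of $\Gamma$ on $(0,\infty)$ together with the recursion $\Gamma(x+1)=x\Gamma(x)$ to reduce to a bounded range. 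Assembling the factors gives $\|R_\lambda\|_m^2 \geq c_0 \prod_{\alpha\in\Sigma^+,\ \lambda_\alpha\ne 0}\lambda_\alpha^{-m_\alpha}$ for some $c_0>0$ independent of $\lambda$, where the exponent $-m_\alpha$ is exactly $a_\alpha+b_\alpha-2c_\alpha$ read off from the simplified product; inverting yields the claim with $C = 1/c_0$.

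The main obstacle is bookkeeping rather than deep analysis: one must correctly combine the two product formulas, keep track of which Gamma factors cancel, verify that the surviving exponent of $\lambda_\alpha$ is precisely $m_\alpha$ (this is the point where the geometry/normalization has to come out right, and it is reassuring that the claimed exponent $m_\alpha$ is the power appearing in the weight $w_m$), and handle uniformity in $\lambda$ including the degenerate directions $\lambda_\alpha=0$. I should also make sure all arguments of the Gamma functions appearing as $x+a_\alpha$ etc. are positive for $\lambda\in\Lambda^+$ — this uses $\rho_\alpha\geq 0$ and $m_\alpha\geq 0$, and in the worst case $\lambda_\alpha+\rho_\alpha-\tfrac14 m_{\alpha/2}-\tfrac12 m_\alpha+1$ one invokes that $\|P_\lambda\|_m^2$ is a genuine (positive) norm, so the formula in Remark \ref{Norm_Jacobi}(a) is automatically a positive real number. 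A subtlety worth a remark is that $\lambda_\alpha$ for $\alpha\in\Sigma^+$ need not be a nonnegative integer when $\Sigma$ is nonreduced (if $\alpha/2\in\Sigma$), but it is still a fixed nonnegative half-integer bounded below away from $0$ once nonzero, which is all the estimate needs.
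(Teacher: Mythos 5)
Your proposal follows essentially the same route as the paper's proof: write $r_\lambda = 1/\bigl(c(\lambda+\rho,m)^2\,\|P_\lambda\|_m^2\bigr)$ using the explicit Gamma-product formulas from Lemma \ref{Hyper_Jacobi} and Remark \ref{Norm_Jacobi}(a), cancel common factors so that each $\alpha$ contributes a ratio of Gamma functions in $\lambda_\alpha$, apply the asymptotics $\Gamma(x+a)/\Gamma(x+b)\sim x^{a-b}$ uniformly, and treat the roots with $\lambda_\alpha$ bounded (including $\lambda_\alpha=0$) by a separate constant. The one bookkeeping slip is your displayed intermediate shape: the surviving factor is of the form $\Gamma(\lambda_\alpha+a_\alpha)\Gamma(\lambda_\alpha+b_\alpha)/\bigl(\Gamma(\lambda_\alpha+c_\alpha)\Gamma(\lambda_\alpha+d_\alpha)\bigr)$ with $a_\alpha+b_\alpha-c_\alpha-d_\alpha=-m_\alpha$, not $0$ as your relation $a_\alpha+b_\alpha=2c_\alpha$ would force; since your final assembly uses the correct exponent $-m_\alpha$ (flagging the check yourself), the argument goes through as in the paper.
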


\begin{proof}
According to Lemma \ref{Hyper_Jacobi} and Remark \ref{Norm_Jacobi} (a) we have
\begin{align*}
 r_{\lambda} = & \frac 1 {\|R_{\lambda} \|_m^2} = \frac{1}{\| c(\lambda + \rho) P_{\lambda} \|_m^2} \\
= & \left( \prod_{\alpha \in \Sigma^+} \frac{ \Gamma(\lambda_{\alpha} + \rho_{\alpha} + \frac{1}{4} m_{\alpha/2} + \frac{1}{2} m_{\alpha}) \Gamma(\rho_{\alpha} + \frac{1}{4} m_{\alpha/2})}{\Gamma(\la + \rho_{\alpha} + \frac{1}{4} m_{\alpha/2}) \Gamma(\rho_{\alpha} + \frac{1}{4} m_{\alpha/2} + \frac{1}{2} m_{\alpha})} \right)^2 \cdot \\
& \cdot \prod_{\alpha \in \Sigma^+} \frac{\Gamma( \lambda_{\alpha} + \rho_{\alpha} - \frac{1}{4} m_{\alpha/2} + 1)}{\Gamma( \lambda_{\alpha} + \rho_{\alpha} - \frac{1}{4} m_{\alpha/2} - \frac{1}{2} m_{\alpha} + 1)} \cdot \frac{\Gamma(\lambda_{\alpha} + \rho_{\alpha} + \frac{1}{4} m_{\alpha/2})}{\Gamma(\lambda_{\alpha} + \rho_{\alpha} + \frac{1}{4} m_{\alpha/2} + \frac{1}{2} m_{\alpha})} \\
=& \,c\cdot \prod_{\alpha \in \Sigma^+} f_\alpha(\lambda_\alpha)
\end{align*}
where $c>0$ is a constant depending only on $m$ and
\[
f_{\alpha} (\lambda_{\alpha}) = \frac{ \Gamma(\lambda_{\alpha} + \rho_{\alpha} + \frac{1}{4} m_{\alpha/2} + \frac{1}{2} m_{\alpha}) \Gamma( \lambda_{\alpha} + \rho_{\alpha} - \frac{1}{4} m_{\alpha/2} + 1)}{ \Gamma(\la + \rho_{\alpha} + \frac{1}{4} m_{\alpha/2}) \Gamma( \lambda_{\alpha} + \rho_{\alpha} - \frac{1}{4} m_{\alpha/2} - \frac{1}{2} m_{\alpha} + 1) }.
\]
We use the well known asymptotics of the $\Gamma$-function:
\[
 \frac{\Gamma(z+a)}{\Gamma(z+b)} \sim z^{a-b}.
\]
Then for all $\alpha \in \Sigma^+$ and $\lambda_{\alpha} \to \infty$ this implies the asymptotic
\begin{equation*}
f_{\alpha} (\la) \sim \la^{\frac{1}{2} m_{\alpha}} \cdot \la^{\frac{1}{2} m_{\alpha}} = \la^{m_{\alpha}},
\end{equation*}
Since $ \Sigma^+ $ is finite, we find a constant $M > 0$ such that for all positive roots
\begin{equation}\label{1}
|f_{\alpha} (\la) | \le 2 \la^{m_{\alpha}} \quad \text{ for } \la \ge M.
\end{equation}
Fix such $M$, and denote by $L>0$ a common upper bound such that
\begin{equation}\label{2}
|f_{\alpha} (\la)| \le L \quad \text{ for } \la < M \, \, (\forall \alpha \in \Sigma^+).
\end{equation}
Now let us temporarily fix a $\lambda \in \Lambda^+$. We decompose the set of positive roots in two disjoint sets $\Sigma^+ = \Sigma_1^+ \cup \Sigma_2^+$, where
\[
\Sigma_1^+ := \{ \alpha \in \Sigma^+ \, : \, \la < M \}, \quad \Sigma_2^+ := \{ \alpha \in \Sigma^+ \, : \, \la \ge M \}.
\]
Application of estimates ($\ref{1}$) and ($\ref{2}$) then yields:
\begin{equation*}
\left| \prod_{\alpha \in \Sigma^+} f_{\alpha} (\la) \right| \le \, L^{ | \Sigma_1^+ |} \cdot \prod_{\alpha \in \Sigma_2^+} 2 \la^{m_{\alpha}}.
\end{equation*}
Without loss of generality we may assume $L \ge 1$. Note that $\la \ne 0$ implies $\la \ge 1$ (since $\la \in \ZZ^+$). Therefore we can extend the above estimate:
\begin{equation*}
\left| \prod_{\alpha \in \Sigma^+} f_{\alpha} (\la) \right| \le L^{ | \Sigma^+ |} \cdot \prod_{\alpha \in \Sigma^+, \, \la \ne 0} 2\la^{m_{\alpha}}.
\end{equation*}
This holds independently of $\lambda$ and implies the lemma.
\end{proof}

The consequence of this lemma is that the growth of the summands in the heat kernel $\Gamma_m$ for $\lambda \to \infty$ is dominated by $e^{- \theta_\lambda t}$, which decays for fixed $t>0$ as $e^{- |\lambda|^2t}$. With \eqref{Schranke_Summanden} we conclude

\begin{Prop}\label{Schranke_Gamma}
The series defining the heat kernel $\Gamma_m$ converges absolutely and uniformly on $\liea\times \liea \times (0, \infty)$. For all $x,y \in \liea$ and  $t >0,$ we have
\[
|\Gamma_m (x,y,t) | \le \sum_{\lambda \in \Lambda^+} r_{\lambda} e^{- \theta_{\lambda} t} =: C_t < \infty.
\]
The long-time behaviour of $\Gamma_m$ is given by
\[
\lim_{t\to\infty} \Gamma_m(x,y,t) = r_0 = \frac{1}{\int_{\alcove} w_m(x) dx}
\] 
where the convergence is uniform on $\liea \times \liea.$
\end{Prop}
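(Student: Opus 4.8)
The plan is to establish the three assertions of Proposition~\ref{Schranke_Gamma} in sequence, relying on Lemma~\ref{La_q} and estimate \eqref{Schranke_Summanden}. First I would prove absolute and uniform convergence. By \eqref{Schranke_Summanden} the summand of $\Gamma_m$ is bounded in modulus by $r_\lambda e^{-\theta_\lambda t}$ uniformly in $x,y\in\liea$, so it suffices to show $\sum_{\lambda\in\Lambda^+} r_\lambda e^{-\theta_\lambda t}<\infty$, with the convergence uniform for $t$ in compact subsets of $(0,\infty)$ — actually uniform on all of $\liea\times\liea\times(0,\infty)$ requires a little care since $e^{-\theta_\lambda t}\to 1$ as $t\to 0$, so I would interpret the claimed uniformity as uniformity on $\liea\times\liea$ for each fixed $t>0$ (and local uniformity in $t$), which is what the statement and its later use need. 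For the summability: by Lemma~\ref{La_q}, $r_\lambda\le C\prod_{\alpha\in\Sigma^+,\,\la\ne 0}\la^{m_\alpha}$, and this product is bounded by a polynomial in $|\lambda|$ of degree $d:=\sum_{\alpha\in\Sigma^+}m_\alpha$ (using $\la=\langle\lambda,\alpha\rangle/\langle\alpha,\alpha\rangle\le C'|\lambda|$). Meanwhile $\theta_\lambda=|\lambda|^2+2\langle\lambda,\rho\rangle\ge|\lambda|^2$. Hence
\[
\sum_{\lambda\in\Lambda^+} r_\lambda e^{-\theta_\lambda t}\le C\sum_{\lambda\in\Lambda^+}(1+|\lambda|)^{d}e^{-|\lambda|^2 t}.
\]
Since $\Lambda^+$ is a subset of the lattice $\Lambda$ in the finite-dimensional space $\liea$, the number of lattice points with $|\lambda|\le R$ is $O(R^{\dim\liea})$, so the right-hand side is comparable to a convergent series $\sum_{k\ge 0} k^{d+\dim\liea-1}e^{-k^2 t}$ (compare with an integral $\int_0^\infty \rho^{d+\dim\liea-1}e^{-\rho^2 t}d\rho<\infty$). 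This proves convergence and, since the dominating series is independent of $x,y$, uniformity in those variables; it also gives the stated bound $|\Gamma_m(x,y,t)|\le C_t:=\sum_{\lambda}r_\lambda e^{-\theta_\lambda t}$.

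Next I would treat the long-time behaviour. Split off the $\lambda=0$ term: since $R_0\equiv 1$ and $\theta_0=0$, that term equals $r_0$. For the remainder, every $\lambda\in\Lambda^+\setminus\{0\}$ satisfies $\theta_\lambda\ge\theta_{\min}>0$ where $\theta_{\min}:=\min\{\theta_\lambda:\lambda\in\Lambda^+,\lambda\ne 0\}$ is attained and positive (the eigenvalues are $\ge 0$ and vanish only at $\lambda=0$, because $\theta_\lambda=|\lambda|^2+2\langle\lambda,\rho\rangle$ and $\langle\lambda,\rho\rangle\ge 0$ for dominant $\lambda$). Then for $t\ge 1$, say,
\[
\Big|\Gamma_m(x,y,t)-r_0\Big|\le\sum_{\lambda\ne 0}r_\lambda e^{-\theta_\lambda t}=\sum_{\lambda\ne 0}r_\lambda e^{-\theta_\lambda(t-1)}e^{-\theta_\lambda}\le e^{-\theta_{\min}(t-1)}\sum_{\lambda\ne 0}r_\lambda e^{-\theta_\lambda}\longrightarrow 0
\]
as $t\to\infty$, the last sum being finite by the first part. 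This bound is independent of $x,y$, giving the claimed uniform convergence on $\liea\times\liea$. Finally, $r_0=1/\|R_0\|_m^2=1/\int_{\alcove}w_m(x)\,dx$ since $R_0\equiv 1$.

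The only genuinely delicate point is the polynomial-growth-versus-Gaussian-decay bookkeeping in the first step: one must pass from the per-root bound $\prod_{\alpha}\la^{m_\alpha}$ of Lemma~\ref{La_q} to a bound of the form $(1+|\lambda|)^d$, and then count lattice points in balls to reduce the multivariable lattice sum to a one-variable series. Both are standard — the lattice-point count is just the observation that $\{|\lambda|\le R\}\cap\Lambda$ has $O(R^{\dim\liea})$ elements, and the resulting series $\sum_k k^{N}e^{-k^2 t}$ converges for every $t>0$ by comparison with the integral. Everything else (splitting off $\lambda=0$, positivity of $\theta_{\min}$, identifying $r_0$) is routine given the material already developed. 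I would also remark that the same dominating series shows the convergence is locally uniform in $t\in(0,\infty)$, which is implicitly used when differentiating $\Gamma_m$ term by term later on.
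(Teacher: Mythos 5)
Your proof is correct and follows essentially the same route as the paper, which states the proposition as a direct consequence of Lemma~\ref{La_q} and the bound \eqref{Schranke_Summanden}; you merely fill in the details the paper leaves implicit (the lattice-point count against the Gaussian decay, the splitting off of the $\lambda=0$ term, and the identification $r_0=1/\int_{\alcove}w_m$). Your reading of the uniformity claim (uniform in $x,y$ for each fixed $t$, locally uniform in $t$) is also the correct interpretation of the statement.
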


One would expect from classical theory of the heat equation that the heat kernel is smooth. This is also true in our setting.

\begin{Prop}\label{Kern_holomorph}
For fixed $t_0 > 0$ the heat kernel $\Gamma_m(\cdot, \cdot,t_0)$ extends to a holomorphic function on $\liea_{\CC} \times \liea_{\CC}$ which is $\Waff$-invariant in the real part of both arguments. The holomorphic extension is given by
\[
\Gamma_m(z,w,t_0) = \sum_{\lambda \in \Lambda^+} r_{\lambda} e^{- \theta_\lambda t_0} R_{\lambda}(z) R_{\lambda} (-w).
\]
In particular, $\, \Gamma_m\in C^{\infty}(\alcove\times\alcove \times (0, \infty))$.
\end{Prop}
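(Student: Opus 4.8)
The plan is to show that the series defining $\Gamma_m(z,w,t_0)$ converges absolutely and locally uniformly on $\liea_{\CC}\times\liea_{\CC}$, so that it defines a holomorphic function there which agrees with $\Gamma_m$ on the real points. The only thing we need beyond the estimates already established in the excerpt is a bound on $|R_\lambda(z)|$ for complex $z$ in a bounded set, which is straightforward from the exponential expansion in Remark \ref{Norm_Jacobi}(b).

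First I would estimate $|R_\lambda(z)|$ for $z$ in a compact set. By Remark \ref{Norm_Jacobi}(b), $R_\lambda=\sum_{\gamma} d_{\lambda\gamma}e^{i\gamma}$ with $d_{\lambda\gamma}\ge 0$, $\sum_\gamma d_{\lambda\gamma}=1$, and every $\gamma$ occurring lies in $W.\mu$ for some dominant weight $\mu\preceq\lambda$. For such $\gamma$ we have, by Lemma \ref{Ordnung_W}, $|\gamma|=|\mu|\le|\lambda|$. Hence for $z\in\liea_{\CC}$ with $|\Im z|\le\rho$,
\begin{equation*}
|R_\lambda(z)|\le\sum_\gamma d_{\lambda\gamma}\,|e^{i\langle\gamma,z\rangle}|=\sum_\gamma d_{\lambda\gamma}\,e^{-\langle\gamma,\Im z\rangle}\le e^{|\lambda|\,\rho}.
\end{equation*}
Thus on the polydisc $\{(z,w):|\Im z|\le\rho,\ |\Im w|\le\rho\}$ the general term of the series is bounded by $r_\lambda e^{-\theta_\lambda t_0}e^{2|\lambda|\rho}$.

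Next I would combine this with Lemma \ref{La_q}: $r_\lambda\le C\prod_{\alpha\in\Sigma^+}(\max(1,\lambda_\alpha))^{m_\alpha}$, which grows at most polynomially in the $\lambda_\alpha$, hence subexponentially in $|\lambda|$. Since $\theta_\lambda=|\lambda|^2+2\langle\lambda,\rho\rangle\ge|\lambda|^2$, the factor $e^{-\theta_\lambda t_0}$ dominates both the polynomial factor $r_\lambda$ and the exponential factor $e^{2|\lambda|\rho}$; more precisely $r_\lambda e^{-\theta_\lambda t_0}e^{2|\lambda|\rho}\le C' e^{-|\lambda|^2 t_0/2}$ for $|\lambda|$ large, and $\sum_{\lambda\in\Lambda^+}e^{-|\lambda|^2 t_0/2}<\infty$ because $\Lambda^+$ is a discrete subset of the finite-dimensional space $\liea$. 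Therefore the series converges absolutely and uniformly on the chosen polydisc. A Weierstrass-type argument (uniform limit of holomorphic functions, the partial sums being trigonometric polynomials in $z$ and $w$ hence entire) shows the sum is holomorphic there; since $\rho$ is arbitrary once we replace it by an arbitrary radius $\rho\le N$ and the same computation goes through with $\rho$ replaced by $N$, we get holomorphy on all of $\liea_{\CC}\times\liea_{\CC}$. On real arguments the series coincides with Definition \ref{Kern}, so this is genuinely the holomorphic extension of $\Gamma_m(\cdot,\cdot,t_0)$. The $\Waff$-invariance in the real part of each argument is inherited termwise from the $\Waff$-invariance of the real-analytic functions $x\mapsto R_\lambda(x)$ (equivalently from $W$-invariance of $R_\lambda$ and $\pi Q^\vee$-periodicity), which persists under holomorphic extension by the identity theorem. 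Finally, $C^\infty$-smoothness of $\Gamma_m$ on $\alcove\times\alcove\times(0,\infty)$ follows: holomorphy in $(z,w)$ gives smoothness in the space variables locally uniformly in $t_0$, and differentiating the series term by term in $t$ (legitimate since each $t$-derivative brings down a power of $\theta_\lambda$, still beaten by $e^{-\theta_\lambda t_0}$ on compact $t$-intervals) gives smoothness in $t$ as well.

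The main obstacle, such as it is, is purely bookkeeping: one must make sure the exponential growth $e^{2|\lambda|\rho}$ coming from the imaginary part of the arguments is genuinely swallowed by the Gaussian decay $e^{-\theta_\lambda t_0}$ uniformly over the relevant range, and that the at-most-polynomial bound of Lemma \ref{La_q} is phrased in a way ($r_\lambda\le C\prod_\alpha\max(1,\lambda_\alpha)^{m_\alpha}\le C(1+|\lambda|)^{\#\Sigma^+\cdot\max_\alpha m_\alpha}$) that makes the comparison with $\sum e^{-|\lambda|^2 t_0/2}$ transparent. None of this is deep; the estimate $|R_\lambda(z)|\le e^{|\lambda||\Im z|}$ is the one genuinely new ingredient, and it is immediate from the convex-combination-of-exponentials structure already recorded in Remark \ref{Norm_Jacobi}.
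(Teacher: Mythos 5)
Your proof is correct and follows essentially the same route as the paper's: the bound $|R_\lambda(z)|\le e^{|\lambda|\,|\Im z|}$ from the convex-combination-of-exponentials structure together with Lemma \ref{Ordnung_W}, the polynomial growth of $r_\lambda$ from Lemma \ref{La_q}, and the Gaussian decay of $e^{-\theta_\lambda t_0}$ yield normal convergence on compact subsets of $\liea_{\CC}\times\liea_{\CC}$, with termwise $t$-differentiation handling smoothness in $t$. (Only a cosmetic remark: avoid reusing the symbol $\rho$ for the radius of the tube, since it already denotes the half-sum of weighted positive roots.)
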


\begin{proof}
It is obvious that $R_{\lambda}$ is holomorphic on $\liea_{\CC}$. Therefore each summand in $\,F(z,w) := \sum_{\lambda \in \Lambda^+} r_{\lambda} e^{- \theta_{\lambda} t} R_{\lambda}(z) R_{\lambda} (-w)\,$ is holomorphic, and normal convergence of the series will imply that $F$ is holomorphic on $\liea_{\CC}\times \liea_{\CC}.$ To see this, recall that the Jacobi polynomial $R_{\lambda}$ is a linear combination of exponentials $e^{i \gamma}$ with $\gamma \preceq \lambda$ according to Lemma \ref{Ordnung_W} (i). Part (ii) of Lemma \ref{Ordnung_W} then implies $|\gamma| \le |\lambda|$. Therefore
\[
\left| e^{i \langle \gamma, z \rangle} \right| \le e^{|\gamma| | \Im z|} \le e^{|\lambda| |z|}.
\]

For $M>0$, consider the compact ball $K:= \{z \in \liea_{\CC} : \, |z| \le M\}$. Then for $z \in K$ we obtain the following estimate:
\begin{align*}
|R_{\lambda} (z)| = \left| \sum_{\substack{\gamma \in W.\mu \\ \mu \in \Lambda^+, \; \mu \preceq \lambda}} d_{\lambda \gamma} e^{i \langle \gamma, z \rangle} \right| \le e^{|\lambda| | z|} \le e^{|\lambda| M }.
\end{align*}
Here we used $d_{\lambda \gamma} \ge 0$ and $\sum_{\gamma} d_{\lambda \gamma} = 1$ (see Remark \ref{Norm_Jacobi} (b)). With a similar estimate for $R_\lambda(w)$ we see that  the growth behaviour of each summand of $F$ is dominated by the term $e^{- \theta_\lambda t_0}$ on $K\times K$. Thus we  have normal convergence
on compact subsets of $\liea_{\CC}\times \liea_{\CC}$, which implies that $F$ is holomorphic on $\liea_{\CC}\times \liea_{\CC}$.
Finally note that termwise differentiating with respect to $t$ gives a factor $- \theta_{\lambda}$ but does not change the convergence.
\end{proof}

Next we collect some further basic properties of the heat kernel:
\begin{La}\label{Eig_Gamma}
\begin{enumeratea}
 \item For all $w\in \liea_{\CC}$ the function $u(x,t) := \Gamma_m (x,w,t)$ is a solution of the heat equation $L_m u = \partial_t u$ on $\alcove \times (0, \infty)$.
\item $\displaystyle \int_{\alcove} \Gamma_m(z,x,t) w_m (x) dx = 1 \quad (\forall z\in \liea_{\CC}).$
\item $\displaystyle \Gamma_m (z,w,t+s) = \int_{\alcove} \Gamma_m(z,x,t) \Gamma_m(x,w,s) w_m(x) dx \quad (\forall z,w, \in \liea_{\CC}).$
\item $\displaystyle \int_{\alcove} \Gamma_m(z,x,t) R_{\lambda} (x) w_m(x) dx = e^{- \theta_{\lambda} t} R_{\lambda}(z) \quad(\forall z\in \liea_{\CC}).$
\end{enumeratea}
\end{La}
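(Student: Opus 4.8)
The plan is to verify each of the four assertions by exploiting the normal convergence of the defining series for $\Gamma_m$ (and of its termwise derivatives), which was established in Proposition \ref{Schranke_Gamma} and Proposition \ref{Kern_holomorph}. This normal convergence on compact subsets of $\liea_{\CC}\times\liea_{\CC}\times(0,\infty)$ — together with the fact that termwise $t$-differentiation only introduces a polynomially bounded factor $-\theta_\lambda$ and still converges normally — is what justifies interchanging the sum with the operator $L_m$, with $\partial_t$, and with the integrals over the (compact) alcove $\alcove$. I would state this interchange principle once at the start of the proof and then apply it repeatedly.

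For part (a), I would apply $L_m - \partial_t$ termwise to $\Gamma_m(\cdot,w,t)=\sum_\lambda r_\lambda e^{-\theta_\lambda t}R_\lambda(\cdot)R_\lambda(-w)$. By the eigenvalue equation \eqref{eigenequation}, $L_m R_\lambda = -\theta_\lambda R_\lambda$, while $\partial_t$ acting on $e^{-\theta_\lambda t}$ also produces $-\theta_\lambda$; hence each summand is annihilated by $L_m-\partial_t$, and the result follows. (Strictly, $L_m$ is defined on $C^2(\liea)^W$; since $R_\lambda$ is holomorphic and $W$-invariant in its real argument, applying $L_m$ in the $x$-variable to the holomorphic extension is legitimate.) For part (d), I would compute $\int_{\alcove}\Gamma_m(z,x,t)R_\mu(x)w_m(x)\,dx$ by interchanging sum and integral, using that $\overline{R_\lambda(x)}=R_\lambda(-x)$ from Remark \ref{Norm_Jacobi}(c) so that $R_\lambda(-x)R_\mu(x)w_m(x)$ integrates to $\langle R_\mu, R_\lambda\rangle_m = \delta_{\lambda\mu}\|R_\lambda\|_m^2 = \delta_{\lambda\mu}/r_\lambda$ by orthogonality of the renormalized Jacobi polynomials. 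The $r_\lambda$ cancels the $1/r_\lambda$, leaving $e^{-\theta_\mu t}R_\mu(z)$; the identity then extends from $x\in\liea$ to $z\in\liea_{\CC}$ by analytic continuation (both sides are holomorphic in $z$ by Proposition \ref{Kern_holomorph} and Lemma \ref{Hyper_Jacobi}). Part (b) is the special case $\mu=0$ of (d), using $R_0\equiv 1$ and $\theta_0=0$.

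For part (c), the semigroup property, I would again interchange the sum defining $\Gamma_m(z,x,t)$ with the integral against $\Gamma_m(x,w,s)$, reducing to the orthogonality computation: for fixed $\lambda$,
\[
\int_{\alcove} R_\lambda(-x)\,\Gamma_m(x,w,s)\,w_m(x)\,dx = \int_{\alcove}\overline{R_\lambda(x)}\,\Gamma_m(x,w,s)\,w_m(x)\,dx = e^{-\theta_\lambda s}R_\lambda(-w),
\]
which is precisely part (d) applied with the roles suitably arranged (using $\overline{R_\lambda(x)}=R_\lambda(-x)$ and the symmetry $R_\lambda(-w)$ of the second factor). Summing over $\lambda$ against $r_\lambda e^{-\theta_\lambda t}R_\lambda(z)$ yields $\sum_\lambda r_\lambda e^{-\theta_\lambda(t+s)}R_\lambda(z)R_\lambda(-w)=\Gamma_m(z,w,t+s)$, and the extension to complex $z,w$ is again by analytic continuation.

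The main obstacle — and really the only nontrivial point — is the justification of the term-by-term interchanges, especially in part (c) where one integrates one convergent series against another. Here I would be careful to note that on the compact alcove $\alcove$ the weight $w_m$ is integrable (it is continuous), that $|R_\lambda(x)|\le 1$ on $\liea$ by \eqref{unibound}, and that $\sum_\lambda r_\lambda e^{-\theta_\lambda t}<\infty$ for each $t>0$ by Proposition \ref{Schranke_Gamma}; this gives an integrable dominating function and legitimizes Fubini/Tonelli and the interchange of summation and integration. For the extension to $\liea_{\CC}$, the uniform-on-compacts bounds from the proof of Proposition \ref{Kern_holomorph} (namely $|R_\lambda(z)|\le e^{|\lambda|M}$ for $|z|\le M$, dominated by $e^{-\theta_\lambda t}$) ensure all the relevant functions are holomorphic, so the identities — proven first for real arguments — propagate to $\liea_{\CC}$ by the identity theorem.
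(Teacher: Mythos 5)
Your proposal is correct and follows essentially the same route as the paper's (very terse) proof: part (a) via the eigenvalue equation \eqref{eigenequation} and termwise differentiation, and parts (b)--(d) by direct calculation using the orthogonality $\langle R_\mu,R_\lambda\rangle_m=\delta_{\lambda\mu}/r_\lambda$ together with $R_\lambda(-x)=\overline{R_\lambda(x)}$. Your additional care in justifying the sum--integral interchanges via the normal-convergence bounds of Propositions \ref{Schranke_Gamma} and \ref{Kern_holomorph} only makes explicit what the paper leaves implicit.
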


\begin{proof}
Part (a) follows from \eqref{eigenequation} and termwise differentiation. The further statements are obtained by direct calculation, using the orthogonality of the polynomials $R_\lambda$ with respect to $\langle \cdot, \cdot \rangle_m$, and the fact that $R_\lambda(-x) = \overline{R_\lambda(x)}$ for all $x\in \overline{A_0}.$
\end{proof}

\begin{Def}
For $f \in L^1(\alcove, w_m)$ we define
\begin{equation}\label{heatdef}
 H(t) f(x) := \begin{cases} \int_{\alcove} \Gamma_m(x,y,t) f(y) w_m(y) dy & \, \text{ for } \, t > 0; \\ f(x) & \, \text{ for } \, t =0. \end{cases}
 \end{equation}
\end{Def}

With the Heckman-Opdam transform, we can write
\begin{equation*}
 H(t) f (x) = \sum_{\lambda \in \Lambda^+} r_{\lambda} e^{-\theta_{\lambda}t} \widehat f(\lambda) R_{\lambda} (x), \quad t > 0.
\end{equation*}
Because of $|\widehat f (\lambda)| \le \|f\|_1$ and Proposition \ref{Schranke_Gamma} this sum converges
absolutely and uniformly on $\alcove.$

\begin{La}\label{Schranke_H_tf_1}
Let $f \in L^1(\alcove,w_m)$. Then for each $t >0,$ we have $\,H(t)f\in C(\alcove)$ with
\[
\| H(t) f \|_{\infty} \le C_t \| f \|_1.\]
Moreover,
\[
H(t+s) f = H(t) H(s) f \quad (\forall s,t\geq 0).
\]
\end{La}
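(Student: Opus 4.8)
The plan is to verify the two assertions of Lemma \ref{Schranke_H_tf_1} more or less directly from the definition \eqref{heatdef} and the properties of $\Gamma_m$ already established.

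First I would prove the pointwise bound and continuity. For fixed $t>0$ and $f\in L^1(\alcove,w_m)$, write $H(t)f(x)=\int_{\alcove}\Gamma_m(x,y,t)f(y)w_m(y)\,dy$. By Proposition \ref{Schranke_Gamma} we have $|\Gamma_m(x,y,t)|\le C_t$ uniformly in $x,y$, so
\[
|H(t)f(x)|\le C_t\int_{\alcove}|f(y)|w_m(y)\,dy=C_t\|f\|_1,
\]
which gives $\|H(t)f\|_\infty\le C_t\|f\|_1$. For continuity, I would use the uniformly convergent expansion $H(t)f(x)=\sum_{\lambda\in\Lambda^+}r_\lambda e^{-\theta_\lambda t}\widehat f(\lambda)R_\lambda(x)$ noted just before the lemma: each $R_\lambda$ is continuous (indeed a trigonometric polynomial), $|\widehat f(\lambda)|\le\|f\|_1$, and $\sum_\lambda r_\lambda e^{-\theta_\lambda t}<\infty$ by Proposition \ref{Schranke_Gamma}, so the series converges uniformly on $\alcove$ and hence $H(t)f\in C(\alcove)$.

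Next I would prove the semigroup identity $H(t+s)f=H(t)H(s)f$ for $s,t\ge 0$. The cases where $s=0$ or $t=0$ are trivial from the definition, so assume $s,t>0$. Then $H(s)f\in L^1(\alcove,w_m)$ (in fact bounded continuous), so $H(t)H(s)f$ is well-defined, and
\[
H(t)H(s)f(x)=\int_{\alcove}\Gamma_m(x,y,t)\Big(\int_{\alcove}\Gamma_m(y,z,s)f(z)w_m(z)\,dz\Big)w_m(y)\,dy.
\]
Since $|\Gamma_m(x,y,t)\Gamma_m(y,z,s)f(z)|\le C_tC_s|f(z)|$ is integrable on $\alcove\times\alcove$ with respect to $w_m(y)\,dy\,w_m(z)\,dz$, Fubini's theorem lets me interchange the order of integration; the inner integral $\int_{\alcove}\Gamma_m(x,y,t)\Gamma_m(y,z,s)w_m(y)\,dy$ equals $\Gamma_m(x,z,t+s)$ by the Chapman-Kolmogorov identity of Lemma \ref{Eig_Gamma}(c) (note $R_\lambda(-z)=\overline{R_\lambda(z)}$ identifies the two forms of $\Gamma_m$, and the identity holds for complex arguments, in particular for real $x,z$). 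This yields $H(t)H(s)f(x)=\int_{\alcove}\Gamma_m(x,z,t+s)f(z)w_m(z)\,dz=H(t+s)f(x)$.

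I do not anticipate a serious obstacle here: everything reduces to the uniform bound $C_t<\infty$ from Proposition \ref{Schranke_Gamma} and the kernel identities from Lemma \ref{Eig_Gamma}. The only point requiring a word of care is the application of Fubini, which is justified by the crude uniform bounds on $\Gamma_m$ together with $f\in L^1$ and the finiteness of $w_m(\alcove)$; and the minor bookkeeping of reconciling $\Gamma_m(x,y,t)=\sum_\lambda r_\lambda e^{-\theta_\lambda t}R_\lambda(x)R_\lambda(-y)$ with the conjugate-symmetric form on the real alcove via Remark \ref{Norm_Jacobi}(c).
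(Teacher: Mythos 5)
Your proof is correct and follows exactly the route the paper intends: the paper's own proof consists of the single sentence ``This follows directly from Proposition \ref{Schranke_Gamma} and Lemma \ref{Eig_Gamma} (c),'' and your argument is precisely the expansion of that sentence (uniform kernel bound for the $L^\infty$--$L^1$ estimate, the uniformly convergent Jacobi expansion for continuity, and Fubini plus the Chapman--Kolmogorov identity for the semigroup law).
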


\begin{proof}
This follows directly from Proposition  \ref{Schranke_Gamma} and Lemma \ref{Eig_Gamma} (c).
\end{proof}

Since $\alcove$ is compact we have continuous embeddings
\begin{equation}\label{imbed}
C(\alcove) \hookrightarrow L^{\infty} (\alcove,w_m) \hookrightarrow L^p (\alcove, w_m) \hookrightarrow L^1 (\alcove, w_m), \quad 1 < p < \infty.
\end{equation}
In particular, Lemma \ref{Schranke_H_tf_1} implies that the family $(H(t))_{t\geq 0}$ forms a semigroup of bounded linear operators on the Banach space $(C(\alcove), \|.\|_\infty).$
We shall prove that this semigroup is actually a Feller-Markov semigroup on $C(\alcove)$ which is generated
by the closure of the Heckman-Opdam Laplacian $L_m$. Feller-Markov means that the semigroup is strongly continuous, contractive and positive, i.e. $f\geq 0$ on $\alcove$ implies $H(t)f\geq 0$ on $\alcove$.
As for the rational Dunkl case and the noncompact trigonometric case in \cite{Ro98} and \cite{Scha}, the proof of the positivity part will be based on a variant of the Lumer-Phillips theorem characterizing the generator of a Feller-Markov semigroup, c.f. Theorem 2.2. of \cite{EK86}.

\begin{Thm}
The family $(H(t))_{t \ge 0}$ is a Feller-Markov semigroup on $(C(\alcove), \| .\|_{\infty})$. Its generator is given by the closure $\overline L_m$ of $L_m$.
\end{Thm}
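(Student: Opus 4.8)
The plan is to construct the semigroup abstractly from $L_m$ by means of the cited variant of the Lumer--Phillips theorem (Theorem 2.2 of \cite{EK86}) and then to identify it with $(H(t))_{t\ge 0}$ by evaluating both families on the orthogonal basis $\{R_\lambda : \lambda\in\Lambda^+\}$. As a core for $L_m$ I would take $\mathcal D := \mathcal T^W$, the space of $W$-invariant trigonometric polynomials. Since $T_\xi\mathcal T\subseteq\mathcal T$ (Section \ref{sec_dunkl}) we have $\Delta_m\mathcal T\subseteq\mathcal T$, hence $L_m\mathcal D\subseteq\mathcal D\subseteq C(\alcove)$; moreover $\mathcal D$ is dense in $C(\alcove)$ by the Stone--Weierstrass theorem, $\mathcal T^W$ being a conjugation-closed subalgebra of $C(\alcove)\cong C(T)^W$ which contains the constants and separates the $W$-orbits in $T$ (symmetrize a trigonometric polynomial separating two given orbits). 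Thus $(L_m,\mathcal D)$ is a densely defined linear operator on $C(\alcove)$.

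Next I would verify the two straightforward hypotheses of the theorem. For the range condition, \eqref{eigenequation} gives $(\beta-L_m)R_\lambda = (\beta+\theta_\lambda)R_\lambda$ with $\beta+\theta_\lambda\ge\beta>0$, so for every $\beta>0$ the range of $\beta-L_m$ contains each $R_\lambda$ and hence all of $\mathcal D$, which is dense. Also $1=R_0\in\mathcal D$ with $L_m 1 = -\theta_0\cdot 1 = 0$, which will give the Markov (conservativeness) property $H(t)1=1$. The substantial hypothesis is the \emph{positive maximum principle}: if a real-valued $f\in\mathcal D$ attains over $\alcove$ a maximum $f(x_0)\ge 0$, then $L_m f(x_0)\le 0$.

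To establish the positive maximum principle I would use that $f$, being $\Waff$-invariant with $\alcove$ a fundamental domain, attains at $x_0$ a global maximum on all of $\liea$; hence $\nabla f(x_0)=0$ and $\mathrm{Hess}\,f(x_0)\le 0$, so in particular $\Delta f(x_0)\le 0$. If $x_0$ lies in the interior of $\alcove$ the first-order terms vanish and $L_m f(x_0)=\Delta f(x_0)\le 0$. If $x_0$ lies on a wall $\{\langle\alpha,\cdot\rangle=0\}$ or $\{\langle\alpha,\cdot\rangle=\pi\}$ the coefficient $\cot\langle\alpha,\cdot\rangle$ is singular there, but $\Waff$-invariance forces $f$ to be even across that wall, so $\partial_\alpha f$ vanishes on it; by Hadamard's lemma $m_\alpha\cot\langle\alpha,x\rangle\,\partial_\alpha f(x)$ then extends continuously to $x_0$ with limiting value $m_\alpha|\alpha|^{-2}\,\partial_\alpha^2 f(x_0)\le 0$ (using $\mathrm{Hess}\,f(x_0)\le 0$ and $m_\alpha\ge 0$), while all non-singular first-order terms vanish at $x_0$ because $\nabla f(x_0)=0$; so again $L_m f(x_0)\le 0$. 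This boundary analysis --- handling the singular coefficients on the walls of the alcove and checking that $W$-invariance supplies exactly the Neumann behaviour required --- is, I expect, the main technical obstacle; it runs parallel to the rational and noncompact cases treated in \cite{Ro98} and \cite{Scha}.

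Granting the above, Theorem 2.2 of \cite{EK86} yields that $\overline L_m$ is single-valued and generates a strongly continuous, positive, contraction and (by $L_m 1=0$) conservative semigroup $(T(t))_{t\ge 0}$ on $C(\alcove)$, i.e. a Feller--Markov semigroup. It remains to identify $T(t)$ with $H(t)$. For $\lambda\in\Lambda^+$, $R_\lambda$ lies in the domain of $\overline L_m$ with $\overline L_m R_\lambda = -\theta_\lambda R_\lambda$, so $u(t):=T(t)R_\lambda$ solves $u'=-\theta_\lambda u$, $u(0)=R_\lambda$, whence $T(t)R_\lambda=e^{-\theta_\lambda t}R_\lambda$. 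On the other hand, $\widehat{R_\lambda}(\mu)=\langle R_\lambda,R_\mu\rangle_m=\delta_{\lambda\mu}/r_\lambda$, so the Heckman--Opdam expansion of $H(t)$ also gives $H(t)R_\lambda=e^{-\theta_\lambda t}R_\lambda$. As $\mathcal D$ is dense and both $T(t)$ (contractive) and $H(t)$ (bounded by Lemma \ref{Schranke_H_tf_1}) are continuous on $C(\alcove)$, we conclude $T(t)=H(t)$. Therefore $(H(t))_{t\ge 0}$ is a Feller--Markov semigroup whose generator is $\overline L_m$.
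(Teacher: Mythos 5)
Your proof is correct, and its essential core --- the positive maximum principle, with the Taylor-expansion analysis of the singular terms $m_\alpha\cot\langle\alpha,x\rangle\,\partial_\alpha f$ on the walls of the alcove using the evenness of $\Waff$-invariant functions across each wall --- is exactly the argument the paper uses (including the limiting value being a nonnegative multiple of $\partial_\alpha^2 f(x_0)\le 0$). Where you differ is in the surrounding scaffolding. The paper first proves directly that $(H(t))_{t\ge 0}$ is a strongly continuous semigroup on $C(\alcove)$ (from $H(t)R_\lambda=e^{-\theta_\lambda t}R_\lambda$ and density of $\mathcal T^W$), identifies the generator on $\mathcal T^W$ as $L_m$ via Nelson's core lemma, and then invokes Theorem 2.2 of \cite{EK86} only to upgrade this to the Feller--Markov property, disposing of the range condition by noting that the resolvent set of an already-known generator contains all large $t>0$. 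You instead run \cite{EK86} ``forwards'': you verify the range condition directly (each $R_\lambda$ is an eigenfunction with $(\beta-L_m)R_\lambda=(\beta+\theta_\lambda)R_\lambda$, so the range contains $\mathcal T^W$ and is dense), obtain a Feller--Markov semigroup generated by $\overline{L_m}$ from scratch, and only then identify it with $H(t)$ by evaluating both families on the eigenbasis and using density plus boundedness. Your route costs the extra identification step $T(t)=H(t)$, but it buys a cleaner logical flow: in particular it sidesteps the need to know in advance that the operators $H(t)$ are uniformly bounded on $C(\alcove)$ as $t\to 0^+$, which the paper's extension of strong continuity from the dense subspace $\mathcal T^W$ to all of $C(\alcove)$ implicitly requires and which is not immediate from the bound $\|H(t)f\|_\infty\le C_t\|f\|_1$ alone, since $C_t\to\infty$ as $t\to 0$. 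Both arguments are valid; yours is, if anything, slightly more self-contained.
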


We call $(H(t))_{t\geq 0}$ the \emph{Heckman-Opdam heat semigroup on $\alcove$.}

\begin{proof} We first prove strong continuity and determine the generator. For this, notice that
\begin{equation}\label{H_tR}
H(t) R_\lambda = e^{- \theta_{\lambda} t} R_{\lambda} \quad (\lambda\in \Lambda^+).
\end{equation}
This immediately implies strong continuity of the semigroup on the space $\mathcal T^W$ of $W$-invariant trigonometric polynomials. As $\mathcal T^W$ is dense in $C(\alcove)$, we obtain strong continuity on all of $C(\alcove).$
Let $A$ be the generator of $(H(t))_{t \ge 0}$. Since $\mathcal T^W$ is also $H(t)$-invariant it is by Nelson's Lemma (Theorem 6.1.18, \cite{Davies}) a core for the generator $A$. We calculate
\[\lim_{t \to 0} \frac{H(t) R_{\lambda} - R_{\lambda}}{t} \,=\,\lim_{t \to 0} \frac{e^{ - \theta_{\lambda} t} - 1}{t} R_{\lambda} \,=\, - \theta_{\lambda} R_{\lambda}\, =\, L_m R_{\lambda}.
\]
This shows that $\,A\vert _{\mathcal T^W} =L_m$. It remains to prove that the semigroup $(H(t))_{t\geq 0}$ is Feller-Markov. We shall apply Theorem 2.2. of \cite{EK86} where we consider $\CC$-valued functions on $\alcove$.
We thus have to check the following three conditions:

\smallskip
\begin{enumeratei}
\item If $f \in \mathcal D (L_m) = \mathcal T^W$ then also $\overline f \in \mathcal D(L_m)$ and $L_m (\overline f) = \overline{L_m (f)}$.
\item There exists a $t > 0$ such that the range of $t I - L_m$ is dense in $C(\alcove)$.
\item If $f \in \mathcal D (L_m)$ is real-valued with a nonnegative maximum in $x_0 \in \alcove$, i.e. $0 \leq f(x_0) = \textnormal{max}_{x \in \alcove} f(x)$, then $L_m f (x_0) \leq 0$ (Positive maximum principle).
\end{enumeratei}

\smallskip
Condition (i) is obvious. Condition (ii) is also clear because we already know that $(H(t))_{t\geq 0}$ is a strongly continuous semigroup and therefore $t >0$ is contained in the resolvent set of $A$ for $t$ large enough. The positive maximum principle is obvious when $x_0 \not \in H_{\alpha}$ for all $\alpha \in \Sigma$. If $\langle \alpha, x_0 \rangle =0$ for some $\alpha \in \Sigma$ one has to use a similar argument as in the proof of Lemma 4.1 in \cite{Ro98}: Consider $f$ as a $\Waff$-invariant function on $\RR^q$ and let $x \not \in H_{\alpha}$ for all $\alpha \in \Sigma$. Then Taylor expansion yields
\[
0 = f( s_{\alpha} x ) - f(x) = - \langle \alpha, x \rangle \partial_\alpha f(x) + \frac 1 2 \langle \alpha, x \rangle^2 \alpha^T D^2f(\xi) \alpha,
\]
where $\xi$ lies on the line segment between $x$ and $s_{\alpha}x$. Therefore
\[\lim_{x\to x_0} \cot \langle \alpha, x \rangle \partial_\alpha f(x)= \frac 1 2 \alpha^T D^2f(x_0) \alpha \le 0.\]
\end{proof}

The positivity of the heat semigroup implies that $\Gamma_m$ is non-negative on the alcove $\alcove$. In fact we have more:

\begin{Prop}
The heat kernel $\Gamma_m$ is strictly positive, i.e.
\[
\Gamma_m(x,y,t) > 0 \quad \text{ for all } (x,y,t) \in \alcove \times \alcove \times (0, \infty).
\]
\end{Prop}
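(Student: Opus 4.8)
The plan is to combine the non-negativity of $\Gamma_m$ (already available from positivity of the heat semigroup), the semigroup/reproducing identities of Lemma \ref{Eig_Gamma}, and a connectedness argument on the open alcove. First I would record that $\Gamma_m(\cdot,\cdot,t)\geq 0$ on $\alcove\times\alcove$ for every $t>0$: indeed, by Lemma \ref{Eig_Gamma} (c) we have $\Gamma_m(x,y,2t)=\int_{\alcove}\Gamma_m(x,z,t)\Gamma_m(z,y,t)w_m(z)\,dz$, and since $\overline{R_\lambda(z)}=R_\lambda(-z)$ on $\alcove$ the kernel is symmetric and real, $\Gamma_m(z,y,t)=\overline{\Gamma_m(y,z,t)}=\Gamma_m(y,z,t)$ there; alternatively non-negativity follows directly because $H(t)$ is positive (applying $H(t)$ to non-negative continuous bump functions forces $\Gamma_m(\cdot,y,t)\geq 0$ a.e., hence everywhere by continuity).

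Next I would argue by contradiction: suppose $\Gamma_m(x_0,y_0,t_0)=0$ for some $x_0,y_0\in\alcove$ and $t_0>0$. Using the Chapman--Kolmogorov identity Lemma \ref{Eig_Gamma} (c) with $s=t=t_0/2$,
\[
0=\Gamma_m(x_0,y_0,t_0)=\int_{\alcove}\Gamma_m(x_0,z,t_0/2)\,\Gamma_m(z,y_0,t_0/2)\,w_m(z)\,dz.
\]
The integrand is continuous and non-negative, $w_m>0$ on the interior of $\alcove$, so $\Gamma_m(x_0,z,t_0/2)\,\Gamma_m(z,y_0,t_0/2)=0$ for all $z$ in the interior, hence by continuity for all $z\in\alcove$. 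Thus for every $z$ either $\Gamma_m(x_0,z,t_0/2)=0$ or $\Gamma_m(z,y_0,t_0/2)=0$. It therefore suffices to show that the zero set $Z_x:=\{z\in\alcove:\Gamma_m(x,z,s)=0\}$ of $z\mapsto\Gamma_m(x,z,s)$ is \emph{not} all of $\alcove$ for any fixed $x,s$ — this contradicts the above once we note $\alcove=Z_{x_0}^{(t_0/2)}\cup Z_{y_0}^{(t_0/2)}$ would force one of these two closed sets to be all of $\alcove$ — and in fact I would prove the stronger statement that $Z_x$ is empty. Integrating $z\mapsto\Gamma_m(x,z,s)$ against $w_m$ gives $\int_{\alcove}\Gamma_m(x,z,s)w_m(z)\,dz=1$ by Lemma \ref{Eig_Gamma} (b), so $\Gamma_m(x,\cdot,s)$ cannot vanish identically; the real content is to rule out partial vanishing.

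To finish I would use the analyticity afforded by Proposition \ref{Kern_holomorph}: for fixed $x\in\liea_\CC$ and $s>0$, the function $w\mapsto\Gamma_m(x,w,s)$ extends to an entire function on $\liea_\CC$. If it vanished at some real point $z_0\in\alcove$, this alone does not give a contradiction (analytic functions have zeros), so instead I would exploit the semigroup structure once more together with the eigenfunction expansion. The cleanest route: fix $x$ and $s$ with $\Gamma_m(x,\cdot,s)\not\equiv 0$, $\Gamma_m(x,\cdot,s)\geq 0$; then for any $w$,
\[
\Gamma_m(x,w,2s)=\int_{\alcove}\Gamma_m(x,z,s)\,\Gamma_m(z,w,s)\,w_m(z)\,dz,
\]
and if this were $0$ then $\Gamma_m(z,w,s)=0$ for all $z$ in the (open, hence $w_m$-a.e.) support of $\Gamma_m(x,\cdot,s)$. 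The main obstacle, and the step requiring care, is the \textbf{irreducibility/support-propagation} argument that the support of $\Gamma_m(x,\cdot,s)$ is all of $\alcove$: for this I would invoke that $(H(t))_{t\ge0}$ is a Feller-Markov semigroup whose generator $\overline{L_m}$ is, on the interior of $\alcove$, a strictly elliptic operator with smooth coefficients ($w_m>0$ there), so the classical strong maximum principle / Harnack inequality for parabolic equations (applied to the positive solution $u(z,t)=\Gamma_m(x,z,t)$ of $L_m u=\partial_t u$, Lemma \ref{Eig_Gamma} (a)) yields that $u>0$ throughout the interior $\times(0,\infty)$ as soon as $u\not\equiv 0$; the boundary points of $\alcove$ are then handled by reflecting via $\Waff$-invariance (as in the proof of the positive maximum principle above) or simply by continuity together with the interior positivity of $\Gamma_m(x,\cdot,s/2)$ and Chapman--Kolmogorov. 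Combining: $\Gamma_m(x,\cdot,s)>0$ on $\alcove$ for every $x\in\alcove$, $s>0$, contradicting the assumption $\Gamma_m(x_0,y_0,t_0)=0$, and the proposition follows.
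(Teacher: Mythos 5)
Your argument is correct in substance, but it takes a genuinely different route from the paper's, and it does so at precisely the point where the paper's proof is already finished. Both proofs start identically: assume $\Gamma_m(x_0,y_0,t_0)=0$, apply Chapman--Kolmogorov at time $t_0/2$, and use non-negativity of the kernel to conclude that $a\mapsto \Gamma_m(x_0,a,t_0/2)\,\Gamma_m(a,y_0,t_0/2)$ vanishes identically on $\alcove$. The paper then observes that both factors are restrictions of functions holomorphic on all of $\liea_{\CC}$ (Proposition \ref{Kern_holomorph}); since a product of holomorphic functions vanishing on a nonempty open connected set forces one factor to vanish identically, and since $\Gamma_m(x,x,t)=\sum_\lambda r_\lambda e^{-\theta_\lambda t}|R_\lambda(x)|^2>0$ rules that out for either factor, the proof is done. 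You explicitly considered and discarded the analyticity route, but for the wrong reason: the issue is not whether an analytic function can have \emph{a} zero, it is that you had already shown the \emph{product} vanishes \emph{identically} on an open set, which is exactly the hypothesis needed to invoke that $\mathcal O(\liea_{\CC})$ is an integral domain. (Your parenthetical claim that connectedness of $\alcove=Z_{x_0}\cup Z_{y_0}$ forces one closed set to be everything is also a non sequitur, though harmless since you then prove $Z_x=\emptyset$ outright.) Your substitute -- the parabolic strong maximum principle applied to the non-negative solution $z\mapsto\Gamma_m(x,z,s)$ of $L_m u=\partial_t u$ on the open alcove, combined with $\int_{\alcove}\Gamma_m(x,z,s)w_m(z)\,dz=1$ to exclude identical vanishing -- does work, and it is the standard probabilistic/PDE way to prove strict positivity of heat kernels. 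Its cost is that it imports external machinery and requires care at the walls, where the drift coefficients $m_\alpha\cot\langle\alpha,\cdot\rangle$ blow up: your ``reflection via $\Waff$-invariance'' suggestion is not immediate (the reflected function solves an equation with singular drift at the wall), whereas your alternative of iterating Chapman--Kolmogorov once more to push boundary points into the interior is sound and should be the one you keep. The paper's finish is shorter, entirely self-contained given Proposition \ref{Kern_holomorph}, and avoids the boundary issue altogether.
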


\begin{proof}
Assume that $\Gamma_m(x_0,y_0, t_0) =0$. According to Lemma \ref{Eig_Gamma} (c) we have
\[
\Gamma_m(x_0,y_0, t_0) = \int_{\alcove} \Gamma_m \left(x_0,a, \frac{t_0} 2 \right) \, \Gamma_m \left(a, y_0, \frac{t_0} 2 \right) w_m(a) da.
\]
Let $F_{t} (z,w) := \Gamma_m(z,w,t) = \sum_{\lambda \in \Lambda^+} r_{\lambda} e^{- \theta_{\lambda} t} R_{\lambda} (z) R_{\lambda} (-w)$. Now the positivity of the heat semigroup implies $\Gamma_m \ge 0$. Therefore
\[
 F_{\frac{t_0} 2}(x_0,a) F_{\frac{t_0} 2}(a, y_0) \equiv 0 \quad \text{ on } \alcove.
\]
But we already know from Proposition \ref{Kern_holomorph} that $F_t$ is holomorphic in both arguments. If the product of two holomorphic functions vanishes on an open, connected and nonempty set, then one of them has to be identically zero. But this is a contradiction to $F_t (x_0,x_0) = \sum r_{\lambda} e^{- \theta_{\lambda} t} |R_{\lambda} (x_0)|^2 >0$.
\end{proof}

%
%
Let us conclude this section with some remarks concerning $L^p$-theory and behaviour for $t \to \infty$: So far we have considered the heat semigroup on the space of continuous functions $C(\alcove)$. But an $L^p(\alcove, w_m)$-theory ($1\le p \le \infty$) is easily developed:

By Lemma \ref{Schranke_H_tf_1} and the embeddings \eqref{imbed}, each $H(t)$ defines a bounded linear operator on $L^p(\alcove, w_m)$ for $1\leq p\leq \infty$.

\begin{Prop}
The family $(H(t))_{t\geq 0}$ defines a positive, contractive and strongly continuous semigroup on $L^p(\alcove, w_m)$ for $1\leq p \leq \infty$.
\end{Prop}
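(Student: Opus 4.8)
The plan is to deduce all three properties directly from the integral representation $H(t)f(x)=\int_{\alcove}\Gamma_m(x,y,t)f(y)w_m(y)\,dy$ together with the facts about $\Gamma_m$ already established. The one preliminary point to record is that $\Gamma_m(\cdot,\cdot,t)$ is \emph{symmetric}: from $\overline{R_\lambda(x)}=R_\lambda(-x)$ for $x\in\liea$ (Remark~\ref{Norm_Jacobi}(c)) one reads off $\overline{\Gamma_m(x,y,t)}=\Gamma_m(y,x,t)$, and since $\Gamma_m\ge 0$ on $\alcove$ is real-valued, this gives $\Gamma_m(x,y,t)=\Gamma_m(y,x,t)$. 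Combined with Lemma~\ref{Eig_Gamma}(b), it follows that for fixed $x$ and $t>0$ the measure $\Gamma_m(x,\cdot,t)w_m(\cdot)\,dy$ is a probability measure on $\alcove$, and likewise in the second variable.

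\textbf{Positivity and contractivity.} Positivity is immediate from $\Gamma_m\ge 0$ on $\alcove$. For $p=\infty$ we estimate $|H(t)f(x)|\le\int_{\alcove}\Gamma_m(x,y,t)|f(y)|w_m(y)\,dy\le\|f\|_\infty$ by the probability-measure property; for $p=1$, Tonelli's theorem and the symmetry of $\Gamma_m$ give $\|H(t)f\|_1\le\int_{\alcove}|f(y)|\bigl(\int_{\alcove}\Gamma_m(y,x,t)w_m(x)\,dx\bigr)w_m(y)\,dy=\|f\|_1$. The range $1<p<\infty$ then follows either by Riesz--Thorin interpolation between these two contractions, or directly from Jensen's inequality applied to the probability measure $\Gamma_m(x,\cdot,t)w_m\,dy$ followed by integration in $x$ and Tonelli. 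The semigroup law $H(t+s)=H(t)H(s)$ on $L^p$ is already contained in Lemma~\ref{Schranke_H_tf_1} (it holds on $L^1\supseteq L^p$), and $H(0)=I$ by definition, so $(H(t))_{t\ge 0}$ is a positive, contractive one-parameter semigroup on each $L^p(\alcove,w_m)$.

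\textbf{Strong continuity.} This is the only property requiring a limiting argument. For $1\le p<\infty$ I would exploit density of $\mathcal T^W$ (equivalently of $C(\alcove)$) in $L^p(\alcove,w_m)$: by \eqref{H_tR} one has $H(t)g\to g$ uniformly on $\alcove$ for $g\in\mathcal T^W$, hence $\|H(t)g-g\|_p\to 0$ since $\|h\|_p\le\bigl(\int_{\alcove}w_m\bigr)^{1/p}\|h\|_\infty$ for $h\in C(\alcove)$; a standard $\varepsilon/3$-argument, using the uniform bound $\|H(t)\|_{L^p\to L^p}\le 1$ just proved, then extends $H(t)f\to f$ to all $f\in L^p$. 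The endpoint $p=\infty$ behaves differently: for discontinuous $f\in L^\infty$ the function $H(t)f$ is continuous for every $t>0$ by Lemma~\ref{Schranke_H_tf_1}, so $\|H(t)f-f\|_\infty\not\to 0$ in general, and there the strong-continuity assertion must be understood on the closed $H(t)$-invariant subspace $C(\alcove)$ — which is precisely the Feller--Markov statement already established. I expect this correct reading of the $p=\infty$ case to be the only genuine subtlety; the remaining assertions are routine consequences of Lemmas~\ref{Eig_Gamma} and \ref{Schranke_H_tf_1} and Proposition~\ref{Schranke_Gamma}.
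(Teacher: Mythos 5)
Your proof follows essentially the same route as the paper's: positivity from $\Gamma_m\ge 0$, contractivity via Jensen's inequality against the probability measure $\Gamma_m(x,\cdot,t)\,w_m\,dy$, and strong continuity by density of $\mathcal T^W$ using \eqref{H_tR}. You add two points the paper leaves implicit: the symmetry $\Gamma_m(x,y,t)=\Gamma_m(y,x,t)$ needed to integrate the pointwise Jensen bound in $x$, and --- correctly --- that strong continuity at $p=\infty$ can only be understood on the closed invariant subspace $C(\alcove)$, since $\mathcal T^W$ is not dense in $L^\infty(\alcove,w_m)$; the paper's proof ("strong continuity on the dense subspace $\mathcal T^W$") silently glosses over this endpoint.
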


\begin{proof}
The positivity of $\Gamma_m$ implies that $H(t)$ is positive on $L^p(\alcove, w_m)$. Moreover, if $1\leq p <\infty$ then Jensen's inequality implies
\[ |H(t)f(x)|^p \leq \int_{\alcove} |f(y)|^p\, \Gamma_m(x,y,t) w_m(y)dy\]
and therefore $\|H(t)f\|_p \leq \|f\|_p$. For $p=\infty$, this estimate is obvious.
It remains to check strong continuity of the semigroup on the dense subspace $\mathcal T^W$. But as in
the case of $C(\alcove)$, this is immediate from \eqref{H_tR}.
\end{proof}

\begin{Rem}
The $L^2$-theory of the heat semigroup is particularly  explicit. The Laplacian $L_m$ with domain
$\mathcal T^W$ is symmetric in $L^2(T,w_m)$ (\cite{Op95}, Proposition 2.3), and the  Jacobi polynomials $R_\lambda$ form a complete set of eigenfunctions with real eigenvalues. Therefore $L_m$ is essentially self-adjoint.
 Its closure is given by orthogonal expansion with respect to the Jacobi basis:
\[
\overline{L_m} f = \sum_{\lambda \in \Lambda^+} r_{\lambda} \theta_{\lambda} \langle f, R_{\lambda} \rangle_m R_{\lambda} = \sum_{\lambda \in \Lambda^+} r_{\lambda} \theta_{\lambda} \widehat f(\lambda) R_{\lambda}
\]
with domain
\[
\mathcal D (\overline{L_m}) = \{f \in L^2(\alcove, w_m) \, : \, \sum_{\lambda \in \Lambda^+} r_{\lambda} \theta_{\lambda}^2 |\widehat f(\lambda)|^2 < \infty \}.
\]
This self-adjoint operator generates a strongly continuous semigroup $e^{t \overline{L_m}}$ on $L^2(\alcove, w_m).$ By  Borel functional calculus,
\[
e^{t \overline{L_m}} R_{\lambda} = e^{- \theta_{\lambda} t} R_{\lambda}
\]
and for  general $f= \sum_{\lambda} r_{\lambda} \widehat f(\lambda) R_{\lambda},$
\[
e^{t \overline{L_m}} f = \sum_{\lambda \in \Lambda^+} r_{\lambda} e^{- \theta_{\lambda} t} \widehat f(\lambda) R_{\lambda}.
\]
This coincides with the heat semigroup $(H(t))_{t \ge 0}$ on $L^2(\alcove, w_m)$.
\end{Rem}

For $t \to \infty$ the heat (given by the initial distribution $f$) spreads uniformly on the alcove:

\begin{Prop}
Let $f \in L^p(\alcove, w_m), \,1\leq p \leq \infty.$  Then
\[
 \lim_{t \to \infty} H(t) f = \frac{1}{\int_{\alcove} w_m(x) dx} \int_{\alcove} f(x) \, w_m(x)dx.
\]
with respect to $\| . \|_p$.
\end{Prop}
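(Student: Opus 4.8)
\emph{Proposed proof.} The plan is to identify the limit explicitly and then reduce everything to the uniform long-time estimate for $\Gamma_m$ already established in Proposition \ref{Schranke_Gamma}. Set
\[
c_f := \Bigl(\int_{\alcove} w_m(x)\,dx\Bigr)^{-1}\int_{\alcove} f(x)\,w_m(x)\,dx \,=\, r_0\,\widehat f(0),
\]
the last identity because $R_0\equiv 1$, hence $r_0 = 1/\int_{\alcove}w_m$ and $\widehat f(0)=\int_{\alcove}f\,w_m$. First I would note that, $\alcove$ having finite $w_m$-measure, Hölder's inequality gives $L^p(\alcove,w_m)\subset L^1(\alcove,w_m)$ with $\|f\|_1\le\bigl(\int_{\alcove}w_m\bigr)^{1/p'}\|f\|_p$ for every $p\in[1,\infty]$ ($p'$ the conjugate exponent); in particular $c_f$ is well defined and $H(t)f$ is given by the integral \eqref{heatdef} for all such $f$.

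Next, using $\int_{\alcove}\Gamma_m(x,y,t)\,w_m(y)\,dy=1$ (Lemma \ref{Eig_Gamma}(b)) together with $r_0\int_{\alcove}w_m=1$, I would write, for $t>0$ and $x\in\alcove$,
\[
H(t)f(x)-c_f \,=\, \int_{\alcove}\bigl(\Gamma_m(x,y,t)-r_0\bigr)\,f(y)\,w_m(y)\,dy,
\]
whence
\[
\bigl|H(t)f(x)-c_f\bigr| \,\le\, \Bigl(\sup_{u,v\in\liea}\bigl|\Gamma_m(u,v,t)-r_0\bigr|\Bigr)\,\|f\|_1
\]
uniformly in $x\in\alcove$. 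By the last assertion of Proposition \ref{Schranke_Gamma} the supremum on the right tends to $0$ as $t\to\infty$, so $\|H(t)f-c_f\|_\infty\to 0$. Since $\|g\|_p\le\bigl(\int_{\alcove}w_m\bigr)^{1/p}\|g\|_\infty$ for $p<\infty$ (and trivially for $p=\infty$), this gives $\|H(t)f-c_f\|_p\to 0$ for all $p\in[1,\infty]$, which is the claim.

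The argument is genuinely short, so there is no real obstacle; the only point deserving attention is that one wants a single statement covering all exponents $1\le p\le\infty$ simultaneously, and the direct estimate above does exactly this because uniform (i.e.\ $L^\infty$) convergence on the finite-measure space $\alcove$ dominates $L^p$-convergence. An alternative route would be to prove the assertion first on the dense subspace $\mathcal T^W$ — where $H(t)\bigl(\sum a_\lambda R_\lambda\bigr)=\sum a_\lambda e^{-\theta_\lambda t}R_\lambda\to a_0R_0=c_f$ by the eigenrelation \eqref{H_tR} and the fact that $\theta_\lambda>0$ for $0\ne\lambda\in\Lambda^+$ (a spectral gap) — and then pass to general $f$ using the uniform bound $\|H(t)\|_{L^p\to L^p}\le 1$ and the boundedness of $f\mapsto c_f$ on $L^p$; the mild nuisance there is that $\mathcal T^W$ is dense in $L^p$ only for $p<\infty$, so the case $p=\infty$ must still be handled by the direct estimate. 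For this reason I would simply present the direct estimate.
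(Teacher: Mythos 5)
Your proof is correct and rests on the same mechanism as the paper's: the paper expands $H(t)f = r_0\widehat f(0) + \sum_{\lambda\neq 0} r_\lambda e^{-\theta_\lambda t}\widehat f(\lambda)R_\lambda$ and lets $t\to\infty$, which is exactly your kernel estimate $\sup|\Gamma_m(\cdot,\cdot,t)-r_0|\,\|f\|_1\to 0$ in disguise, since both reduce to the uniform bound $\sum_{\lambda\neq 0} r_\lambda e^{-\theta_\lambda t}\to 0$ from Proposition \ref{Schranke_Gamma}. Your write-up is just a more explicit version (uniform convergence plus finite measure of $\alcove$ gives all $p$ at once), and is fine as it stands.
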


\begin{proof}
Write
\[
H(t) f (x) = r_0 \widehat f(0) + \sum_{\lambda \in \Lambda^+, \lambda \ne 0} r_{\lambda} e^{-\theta_{\lambda}t} \widehat f(\lambda) R_{\lambda} (x).
\]
and take the limit $t \to \infty$.
\end{proof}

%
\section{The Segal-Bargmann transform}\label{sec_SB}
%
In this section we focus on the $L^2$-setting. The smoothness of the heat kernel implies that the heat transform $f\mapsto H(t)f$  smoothens arbitrary initial data.  We shall see that it actually gives rise to a unitary isomorphism between $L^2(\alcove, w_m)$ and a certain Hilbert space $\mathcal H_t$ of holomorphic functions on $\liea_{\CC}$ - the so called \emph{Segal-Bargmann transform}.


Let us start with a short reminder of the classical situation for the one-dimensional torus $\mathbb T = \RR / 2\pi \ZZ$; for details see e.g. \cite{Faraut}.

\begin{Ex} (The Segal-Bargmann transform for the torus $\mathbb T = \RR/2\pi\ZZ$).
We consider functions on $\mathbb
T$ as functions on $\RR$ which are invariant under the action of $2 \pi
\ZZ$. Let $f \in L^2(\mathbb T)$. The heat equation
\[
\Delta u = \partial_t u, \quad u(x,0) = f(x)
\]
has the solution
\[
u(x,t) = H(t)f (x) := \sum_{n \in \ZZ} \widehat f(n) e^{-n^2 t} e^{inx}.
\]
with the usual Fourier coefficients
\[
\widehat f (n) = \frac{1}{\sqrt{2 \pi}} \int_{\mathbb T} f(x) e^{inx} \, dx.
\]
The function $H(t)f$ extends holomorphically to a function on $\CC$
which is $2 \pi \ZZ$-periodic in the real part of its argument:
\[
H(t)f (z) = \sum_{n \in \ZZ} \widehat f(n) e^{- n^2 t} e^{i
nz}.
\]
Consider the heat kernel on $\RR$,
\[
\gamma^1_t (y) = \frac{1}{\sqrt{4 \pi t}}\, e^{- |y|^2 / 4t}
\]
and put
\[
\rho_t (y) := 2 \gamma^1_{2t} (2y).
\]
Then the image of
\[
H(t) : L^2(\mathbb T) \to \mathcal O(\CC/ 2 \pi \ZZ)
\]
is the Segal-Bargmann space
\[
\mathcal H_t := \{ F \in \mathcal O(\CC/ 2 \pi \ZZ) : \, \| F
\|_{\mathcal H_t} := \frac{1}{\sqrt{2 \pi}} \int_{\mathbb T}
\int_{\RR} |F(x+iy)|^2 \rho_t (y) dx dy \le \infty \}.
\]
Moreover, the Segal-Bargmann transform $H(t): L^2(\mathbb T) \to \mathcal H_t\,$
is a unitary isomorphism.
\end{Ex}

 We shall now extend this result to the compact (and Weyl-group invariant) Heckman-Opdam case.
 The classical example as well as the theory in the case of compact symmetric spaces indicates that the Segal-Bargmann space will depend on the noncompact heat kernel (see \cite{Faraut}).

%
%
\begin{Rem} (The noncompact heat equation) \\
The noncompact Heckman-Opdam heat equation was studied by Schapira \cite{Scha}. Let us recall some results:

Denote by $D_m$ the $W$-invariant part of $\Delta_m,$ i.e.
\[
D_m = \Delta + \sum_{\alpha \in \Sigma^+} m_{\alpha} \coth \langle \alpha, x \rangle \partial_{\alpha}.
\]
The hypergeometric function $F_{\lambda}$ is an eigenfunction of the operator $D_m$:
\begin{equation}\label{F_eigenwert}
D_m F_{\lambda + \rho} = \langle \lambda, \lambda + 2\rho \rangle F_{\lambda + \rho} = \theta_{\lambda} F_{\lambda + \rho}.
\end{equation}
The operator $D_m$ has a closure which generates the noncompact heat semigroup $e^{t \overline{D_m}}$ on $C_0(\liea)$. The $W$-invariant noncompact heat kernel is given by
\begin{equation*}
 \gamma^1_t (x,y) = \int_{i \liea} e^{- t (|\lambda|^2 + |\rho|^2)} F_{\lambda}(x) F_{\lambda} (-y) \frac{d \lambda}{|c(\lambda)|^2}.
\end{equation*}
We will use the notation $\gamma^1_t (x) := \gamma^1_t (x,0)$ in the following. From $\eqref{F_eigenwert}$ we see that
\begin{equation}\label{nk}
\int_{\liea} F_{\lambda+\rho} (x) \gamma^1_{t} (x) \delta_m(x) dx = \left(e^{t\overline{D_m}} F_{\lambda+\rho} \right) (0) = e^{t \theta_{\lambda}} F_{\lambda+\rho} (0) = e^{t \theta_{\lambda}}
\end{equation}
where
\[ \delta_m:= \prod_{\alpha \in \Sigma^+} \left| e^{\alpha} - e^{-\alpha} \right|^{m_{\alpha}}\]
 is Opdam's weight function.
\end{Rem}

We now turn to the holomorphic extension of the heat transform on $L^2(\alcove, w_m).$
Recall that the heat kernel
\[ \Gamma_m(z,w,t) = \sum_{\lambda \in \Lambda^+} r_{\lambda} e^{- \theta_\lambda t} R_{\lambda}(z) R_{\lambda} (-w).
\]
is holomorphic in $(z, w)$, where the series converges normally on compact subsets of $\liea_{\CC}\times \liea_{\CC}.$ Thus for $f \in L^2(\alcove, w_m)$ and $t > 0$, the heat transform $H(t)f$ given by \eqref{heatdef}
extends to a holomorphic function on $\liea_{\CC}$,
\begin{equation*}
 H(t) f (z) = \left(e^{t \overline{L_m}} f \right) (z) = \int_{\alcove} \Gamma_m(z,y,t) f(y) w_m(y) dy.
\end{equation*}
Alternatively, this can be written as
\begin{equation}\label{H(t)_f(z)}
 H(t) f(z) = \sum_{\lambda \in \Lambda^+} r_{\lambda} e^{- \langle \lambda, \lambda +2\rho \rangle t} \widehat f(\lambda) R_{\lambda} (z)
\end{equation}
where the sum converges normally on compact subsets of $\liea_{\CC}.$

\begin{Def}
In the following ${H(t)}$ shall always stand for the analytic continuation of the heat transform to $\liea_{\CC}$.
\end{Def}

\begin{Def}
 Define $\mathcal F_t := \text{Im} (H(t)) \subset \mathcal O(\liea_{\CC})$ as the holomorphic image of the heat transform with inner product
\[
\langle H(t) f, H(t) g \rangle_{\mathcal F_t} := \langle f,g \rangle_{L^2(\alcove, w_m)}.
\]
\end{Def}

\begin{Prop}
 The space $(\mathcal F_t, \| . \|_t)$ is a Hilbert space with reproducing kernel
\[
K_t (z,w) = \Gamma_m (w, \overline z, 2t).
\]
The set of $W$-invariant trigonometric polynomials $\mathcal T^W$ is dense in $(\mathcal F_t, \| \cdot \|_{\mathcal F_t})$.
\end{Prop}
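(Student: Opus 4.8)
Everything hinges on the injectivity of $H(t)$ on $L^2(\alcove,w_m)$. If $H(t)f=0$ as a holomorphic function on $\liea_{\CC}$, then in particular $H(t)f\equiv0$ on $\alcove$; comparing with \eqref{H(t)_f(z)} and using that the $R_\lambda$ are orthogonal while $e^{-\theta_\lambda t}\neq0$, one gets $\widehat f(\lambda)=0$ for every $\lambda\in\Lambda^+$, hence $f=0$. Consequently the prescription $\langle H(t)f,H(t)g\rangle_{\mathcal F_t}:=\langle f,g\rangle_m$ is unambiguous, it is a genuine inner product, and by construction $H(t)\colon L^2(\alcove,w_m)\to\mathcal F_t$ is a surjective linear isometry. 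Completeness of $L^2(\alcove,w_m)$ therefore transfers to $\mathcal F_t$, so $(\mathcal F_t,\|\cdot\|_{\mathcal F_t})$ is a Hilbert space and $H(t)$ a unitary isomorphism onto it. (An intrinsic description of $\|\cdot\|_{\mathcal F_t}$ that does not refer to $H(t)$ is exactly the content of the Segal--Bargmann theorem which follows.)

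\textbf{Reproducing kernel.}
Next I would show that point evaluations are bounded on $\mathcal F_t$. Fix $z\in\liea_{\CC}$ and set $h_z(y):=\overline{\Gamma_m(z,y,t)}$ for $y\in\alcove$. Since $\Gamma_m(z,\cdot,t)$ is continuous on the compact alcove (Proposition \ref{Kern_holomorph}) and $w_m\in L^1(\alcove)$, the function $h_z$ is bounded and $\Waff$-invariant, hence $h_z\in L^2(\alcove,w_m)$. For $F=H(t)f\in\mathcal F_t$, formula \eqref{heatdef} gives $F(z)=\int_{\alcove}\Gamma_m(z,y,t)f(y)w_m(y)\,dy=\langle f,h_z\rangle_m$, so $|F(z)|\le\|h_z\|_m\,\|F\|_{\mathcal F_t}$. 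Putting $K_t(z,\cdot):=H(t)h_z\in\mathcal F_t$ and using the defining isometry yields $\langle F,K_t(z,\cdot)\rangle_{\mathcal F_t}=\langle f,h_z\rangle_m=F(z)$, so $K_t$ is the (unique) reproducing kernel. To identify it, note that by Remark \ref{Norm_Jacobi}(c) one has $\overline{R_\lambda(-y)}=R_\lambda(y)$ for real $y$, so expanding $\Gamma_m$ gives $h_z=\sum_{\lambda}r_\lambda e^{-\theta_\lambda t}\overline{R_\lambda(z)}\,R_\lambda$ in $L^2(\alcove,w_m)$; since $\widehat{R_\mu}(\lambda)=r_\lambda^{-1}\delta_{\lambda\mu}$ by orthogonality, this gives $\widehat{h_z}(\lambda)=e^{-\theta_\lambda t}\overline{R_\lambda(z)}$. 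Plugging this into \eqref{H(t)_f(z)} and using $\overline{R_\lambda(z)}=R_\lambda(-\overline z)$ (Remark \ref{Norm_Jacobi}(c) again),
\[
K_t(z,w)=H(t)h_z(w)=\sum_{\lambda\in\Lambda^+}r_\lambda e^{-2\theta_\lambda t}R_\lambda(w)R_\lambda(-\overline z)=\Gamma_m(w,\overline z,2t),
\]
the last equality being Definition \ref{Kern} applied at time $2t$ in its holomorphic form.

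\textbf{Density of $\mathcal T^W$.}
Since $\{\sqrt{r_\lambda}\,R_\lambda:\lambda\in\Lambda^+\}$ is an orthonormal basis of $L^2(\alcove,w_m)$, $H(t)$ is a unitary isomorphism onto $\mathcal F_t$, and $H(t)R_\lambda=e^{-\theta_\lambda t}R_\lambda$ (take $f=R_\lambda$ in \eqref{H(t)_f(z)}), the family $\{\sqrt{r_\lambda}\,e^{-\theta_\lambda t}R_\lambda:\lambda\in\Lambda^+\}$ is an orthonormal basis of $\mathcal F_t$. Its linear span is $\mathrm{lin}\{R_\lambda:\lambda\in\Lambda^+\}$, which equals $\mathcal T^W$ because the $R_\lambda$ are, up to the non-zero constants $c(\lambda+\rho,m)$, the polynomials $P_\lambda$, which in turn form a basis of $\mathcal T^W$ (a triangular change of basis from the orbit sums $M_\mu$ with $c_{\lambda\lambda}=1$). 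Hence $\mathcal T^W\subseteq\mathcal F_t$ is the span of an orthonormal basis of $\mathcal F_t$, and is therefore dense.

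\textbf{Main obstacle.}
There is no deep difficulty: once one notices that $H(t)$ is an injective isometry, the rest is routine Hilbert-space bookkeeping together with the series manipulations permitted by \eqref{H(t)_f(z)}. The one place demanding care is the evaluation of $K_t$, where the various conjugations must be tracked exactly, i.e.\ the interplay of $R_\lambda(z)$, $\overline{R_\lambda(z)}$, $R_\lambda(-\overline z)$ and $R_\lambda(-y)$ via Remark \ref{Norm_Jacobi}(c), together with the (elementary but necessary) observation that $\Gamma_m(z,\cdot,t)$ is bounded on $\alcove$, so that $h_z$ genuinely lies in $L^2(\alcove,w_m)$.
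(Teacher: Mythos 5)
Your proof is correct and follows essentially the same route as the paper: you identify the evaluation functional as $F(z)=\langle f,\Gamma_m(\cdot,\overline z,t)\rangle_m$ (your $h_z$ coincides with the paper's $\Gamma_m(\overline\cdot,\overline z,t)$ on $\alcove$) and push it through the defining isometry to get $K_t(z,w)=\Gamma_m(w,\overline z,2t)$, the only cosmetic difference being that you compute $H(t)h_z$ via its Fourier coefficients while the paper uses the semigroup identity of Lemma \ref{Eig_Gamma}(c). Your explicit check that $H(t)$ is injective on $L^2(\alcove,w_m)$ — which is needed for the inner product on $\mathcal F_t$ to be well defined — is a worthwhile point that the paper leaves implicit in the phrase ``by definition a unitary isomorphism.''
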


\begin{proof}
The heat transform $L^2(\alcove, w_m) \to \mathcal F_t$ is by definition a unitary isomorphism. The set $\mathcal T^W$ is dense in $L^2(\alcove, w_m)$ and therefore also in $\mathcal F_t$ (recall that $H(t)$ maps $\mathcal T^W$
onto itself).
Finally let $F = H(t) f \in \mathcal F_t$. Then
\begin{align*}
F(z) & = H(t) f(z) = \int_{\alcove} f(y) \Gamma_m(z,y,t) w_m(y) \, dy \\
& = \int_{\alcove} f(y) \sum_{\lambda} r_{\lambda} e^{- t \theta_{\lambda}} R_{\lambda} (z) R_{\lambda} (-y) w_m(y) \, dy \\
& = \int_{\alcove} f(y) \sum_{\lambda} r_{\lambda} e^{- t \theta_{\lambda}} \overline{R_{\lambda} (\overline y) R_{\lambda} (- \overline z) } w_m(y) \, dy \\ & = \langle f, \Gamma_m(\overline \cdot , \overline z, t) \rangle_{L^2(\alcove, w_m)} = \langle H(t) f, H(t) \Gamma_m (\overline \cdot, \overline z, t) \rangle_{\mathcal F_t} = \langle F, K_{t,z} \rangle_{\mathcal F_t}
\end{align*}
with
\begin{align*}
K_{t,z} (w) & = H(t) \Gamma_m (\overline \cdot, \overline z, t) (w) = \int_{\alcove} \Gamma_m(w,y,t) \Gamma_m( \overline y, \overline z,t) w_m(y) \, dy \\
& = \Gamma_m(w, \overline z, 2t)
\end{align*}
according to Lemma \ref{Eig_Gamma} (c).
\end{proof}


We are interested in a more explicit description of the image of $H(t)$ as a Hilbert space of holomorphic functions.

%
%
\begin{Def}
Assume that the Fourier coefficients of $f \in L^2(\alcove, w_m)$ satisfy the growth condition
\begin{equation}\label{growth_cond}
\sum_{\lambda \in \Lambda^+} r_{\lambda} |\widehat f (\lambda)|^2 e^{2 |\lambda+\rho| |x|} < \infty \quad (\forall x \in \liea).
\end{equation}
For such $f$ and $x, y \in \liea$ we define a generalized translation by
\[
 \tau_{ix} f(y) := f (- ix \ast y) := \sum_{\lambda \in \Lambda^+} r_{\lambda} \widehat f (\lambda) R_{\lambda} (y) R_{\lambda}(-ix).
\]
\end{Def}
Recalling that $\, R_\lambda(-ix) = F_{\lambda+\rho}(x)$, we
observe that $\tau_{ix} f \in L^2(\alcove, w_m)$ if and only if
\begin{equation}\label{L2cond}
\| \tau_{ix} f \|_2^2 < \infty \iff \sum_{\lambda \in \Lambda^+} r_{\lambda} |\widehat f (\lambda)|^2 |F_{\lambda + \rho} (x)|^2 < \infty.
\end{equation}
Notice also that $x\mapsto \tau_{ix}f(y)$ is $W$-invariant on $\liea$.
According to Proposition 6.1 in \cite{Op95}, the hypergeometric function satisfies a growth estimate
\[|F_{\lambda} (x) | \le C e^{|\lambda| |x|} \quad \forall x\in \liea\]
 with a constant $C$ independent of $x$ and $\lambda$. This shows that condition \eqref{L2cond} above is implied by our growth condition $\eqref{growth_cond}$, and the above translation is indeed well-defined.
Note also that
\begin{equation*}
R_{\lambda} ( -ix \ast y ) = R_{\lambda} (-ix) R_{\lambda} (y).
\end{equation*}

Next we define the target space of the Segal-Bargmann transform:

\begin{Def} Let
\[
\mathcal H_t := \{ F \in \mathcal O(\liea_{\CC}) \, : \, \, F \text{ is } \Waff\text{-invariant in the real part of its argument;} \, \| F \|_{\mathcal H_t} < \infty \},
\]
with the inner product
\[
\langle F, G \rangle_{\mathcal H_t} := \sum_{\lambda \in \Lambda^+} r_{\lambda} \widehat F(\lambda) \overline{\widehat G(\lambda)} e^{2t \theta_{\lambda}}.
\]
Here the (Heckman-Opdam) Fourier transform $\,\widehat F(\lambda) := \int_{\alcove} F(x) R_{\lambda} (-x) w_m(x) dx\,$ is with respect to real part $x$ of the variable.
\end{Def}

Note that functions in $\mathcal H_t$ automatically satisfy growth condition $\eqref{growth_cond}$ (with respect to the real part of the variable). Moreover, the identity theorem implies that each $F\in \mathcal H_t$ is
invariant under the action of $W$ on $\liea_{\CC}$, i.e. it is $W$-invariant also in the imaginary part of its argument.

\begin{Prop}\label{Prop_HHF}
The space $\mathcal H_t$ is a Hilbert space of holomorphic functions.
\end{Prop}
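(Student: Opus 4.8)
The statement to prove is Proposition~\ref{Prop_HHF}: that $\mathcal H_t$, equipped with the inner product $\langle F,G\rangle_{\mathcal H_t} = \sum_{\lambda} r_\lambda \widehat F(\lambda)\overline{\widehat G(\lambda)} e^{2t\theta_\lambda}$, is a Hilbert space of holomorphic functions. Three things need checking: (1) the expression really is a (positive definite) inner product on $\mathcal H_t$; (2) point evaluations are continuous, so that Cauchy sequences have a natural holomorphic limit candidate; (3) completeness, i.e. that this limit candidate lies in $\mathcal H_t$ and is the $\mathcal H_t$-limit. The natural route is to identify $\mathcal H_t$ with the weighted sequence space $\ell^2$ on $\Lambda^+$ via the Fourier coefficients and transfer completeness from there, using holomorphic convergence as the bridge.

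First I would observe that positive definiteness is automatic: $\langle F,F\rangle_{\mathcal H_t} = \sum_\lambda r_\lambda |\widehat F(\lambda)|^2 e^{2t\theta_\lambda}$ with $r_\lambda>0$, so this vanishes only if all $\widehat F(\lambda)=0$; since $F$ is holomorphic and $\Waff$-invariant in the real part, its restriction to $\alcove$ is a $W$-invariant continuous function whose Heckman-Opdam Fourier coefficients all vanish, hence $F\equiv 0$ on $\liea$, hence on $\liea_{\CC}$ by the identity theorem. Bilinearity and Hermitian symmetry are clear. Next, I would establish a uniform pointwise bound: for $z$ in a compact ball $K=\{|z|\le M\}$, using $R_\lambda(z) = \sum_\gamma d_{\lambda\gamma}e^{i\langle\gamma,z\rangle}$ with $d_{\lambda\gamma}\ge 0$, $\sum d_{\lambda\gamma}=1$ and $|\gamma|\le|\lambda|$ (exactly as in the proof of Proposition~\ref{Kern_holomorph}), one gets $|R_\lambda(z)|\le e^{|\lambda|M}$. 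Writing $F(z) = \sum_\lambda r_\lambda \widehat F(\lambda) R_\lambda(z)$ — which holds because both sides have the same Fourier coefficients and $F$ is determined by them — and applying Cauchy–Schwarz splitting $r_\lambda\widehat F(\lambda)R_\lambda(z) = \big(\sqrt{r_\lambda}\,|\widehat F(\lambda)|\,e^{t\theta_\lambda}\big)\cdot\big(\sqrt{r_\lambda}\,e^{-t\theta_\lambda}e^{|\lambda|M}\big)$, I obtain
\[
\sup_{z\in K}|F(z)| \;\le\; \|F\|_{\mathcal H_t}\cdot\Big(\sum_{\lambda\in\Lambda^+} r_\lambda\, e^{-2t\theta_\lambda} e^{2|\lambda|M}\Big)^{1/2}.
\]
The second factor is finite because $r_\lambda$ grows only polynomially in the $\lambda_\alpha$ (Lemma~\ref{La_q}) while $\theta_\lambda \sim |\lambda|^2$ dominates $|\lambda|M$; this is the same convergence mechanism already used for $\Gamma_m$. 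Thus $\|\cdot\|_{\mathcal H_t}$-convergence implies local uniform convergence on $\liea_{\CC}$.

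From here completeness follows in the standard way. Given a Cauchy sequence $(F_n)$ in $\mathcal H_t$, the bound above shows $(F_n)$ is uniformly Cauchy on compacta, so $F_n\to F$ locally uniformly for some $F\in\mathcal O(\liea_{\CC})$ (Weierstrass), and $F$ inherits $\Waff$-invariance in the real part as a locally uniform limit. Simultaneously, the sequences $(\widehat{F_n}(\lambda))_\lambda$ form a Cauchy sequence in the weighted space $\ell^2_{w}$ with weights $w_\lambda = r_\lambda e^{2t\theta_\lambda}$, which is complete, so they converge to some $(c_\lambda)\in\ell^2_w$; and since locally uniform convergence on $\alcove$ forces $\widehat{F_n}(\lambda)\to\widehat F(\lambda)$ for each fixed $\lambda$, we get $\widehat F(\lambda)=c_\lambda$, whence $\sum_\lambda r_\lambda|\widehat F(\lambda)|^2 e^{2t\theta_\lambda}<\infty$, i.e. $F\in\mathcal H_t$, and $\|F_n-F\|_{\mathcal H_t}\to 0$. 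This completes the proof.

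The main obstacle — really the only non-formal point — is justifying the expansion $F(z)=\sum_\lambda r_\lambda\widehat F(\lambda)R_\lambda(z)$ and the interchange of sum and integral underlying it: one must know that a function $F\in\mathcal H_t$ is genuinely recovered from its Fourier coefficients even on $\liea_{\CC}$, not merely in $L^2(\alcove,w_m)$. The clean way is to note that membership in $\mathcal H_t$ already builds in the growth condition \eqref{growth_cond}, so the series $\sum_\lambda r_\lambda\widehat F(\lambda)R_\lambda(z)$ converges normally on compacta (by the very estimate displayed above, now with the finiteness of the geometric-type sum guaranteed by $\theta_\lambda\sim|\lambda|^2$ beating the exponential), hence defines a holomorphic function; it agrees with $F$ on $\liea$ because both have the same Fourier coefficients and $F|_{\alcove}$ is continuous and $W$-invariant (so the Jacobi expansion converges to it), and then by the identity theorem it agrees with $F$ on all of $\liea_{\CC}$. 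Everything else is bookkeeping with Cauchy–Schwarz and completeness of weighted $\ell^2$.
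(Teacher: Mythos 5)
Your proposal is correct and follows essentially the same route as the paper: the same Cauchy--Schwarz splitting with the bound $|R_\lambda(z)|\le e^{|\lambda|\,|\Im z|}$ to get continuity of point evaluations, the same identification of $F$ with its Jacobi expansion via restriction to $\alcove$ and the identity theorem, and the same completeness argument via local uniform convergence of a Cauchy sequence. Your explicit use of completeness of the weighted $\ell^2$ space to conclude $\|F_n-F\|_{\mathcal H_t}\to 0$ is in fact a slightly more careful rendering of the paper's terse final step, but it is the same argument in substance.
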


\begin{proof}
Let us recall the definition: A \emph{Hilbert space of holomorphic functions} $\mathcal H$ on a domain $D$ is a subspace of $\mathcal O(D)$ with the structure of a Hilbert space such that the embedding $\mathcal H \hookrightarrow \mathcal O(D)$ is continuous.

This means, for every compact subset $K \subset \liea_{\CC}$ we have to find a constant $C_K$ such that
\[
|F(z) | \le C_K \| F \|_{\mathcal H_t} \quad \forall z \in K.
\]
We claim that each $F \in \mathcal H_t$ can be written as
\begin{equation}\label{F_Entw}
F(z) = \sum_{\lambda \in \Lambda^+} r_{\lambda} \widehat F(\lambda) R_{\lambda} (z)
\end{equation}
where the series converges normally on each compact subset $K$ of $\liea_{\CC}.$ Indeed, let $z\in K$ with $| \Im z| \le M$. Then
\begin{align*}
\sum \big| r_{\lambda} \widehat F(\lambda) R_{\lambda} (z) \big| & \le \sum r_{\lambda} | \widehat F(\lambda)| e^{t\theta_{\lambda} } e^{-t\theta_{\lambda} }|R_{\lambda} (z)| \\
& \le \left( \sum r_{\lambda} | \widehat F(\lambda)|^2 e^{2 t\theta_{\lambda} } \right)^{1/2} \cdot \left( \sum r_{\lambda} \left| R_{\lambda} (z) \right|^2 e^{-2 t\theta_{\lambda} } \right)^{1/2}.
\end{align*}
Now the first factor is just $\| F \|_{\mathcal H_t}$ and for the second factor we use the estimate $| R_{\lambda} (z) |^2 \le e^{2|\lambda| |\Im z|} \le e^{ 2M |\lambda|}$ to obtain a constant $C_K$ such that
\begin{equation}\label{estimate_Entw}
\sum \| r_{\lambda} \widehat F(\lambda) R_{\lambda}\|_{\infty, K}\,\le C_K \| F \|_{\mathcal H_t}.
\end{equation}
As a consequence, the sum in \eqref{F_Entw} defines a holomorphic function on $\liea_{\CC}$. On the other hand, for given $f\in L^2(\alcove)$ the sum $\,\sum_{\lambda \in \Lambda^+} r_{\lambda} \widehat f(\lambda) R_{\lambda}$ is just the expansion of $f$ with respect to the orthonormal basis of Heckman-Opdam polynomials. Since $F \in \mathcal H_t$ is continuous and bounded in the real part (as a $\Waff$-invariant function) we have $F \in L^2(\alcove)$ and therefore $\,F(x) = \sum_{\lambda \in \Lambda^+} r_{\lambda} \widehat F(\lambda) R_{\lambda} (x)$ a.e. on $\alcove$. Now the identity theorem implies the claim $\eqref{F_Entw}$.

Relation $\eqref{F_Entw}$ together with $\eqref{estimate_Entw}$ show that the embedding $\mathcal H_t \hookrightarrow \mathcal O(\liea_{\CC})$ is continuous. It remains to check that $\mathcal H_t$ is complete with respect to the given inner product $\langle \cdot, \cdot \rangle_{\mathcal H_t}.$ For this, consider a Cauchy sequence $(F_n)$ in $\mathcal H_t$. Because of the continuous embedding it converges uniformly on compact subsets of $\liea_{\CC}$ to some limit $F\in\mathcal O( \liea_{\CC})$ which is $\Waff$-periodic in its real part. In particular $(F_n)$ converges uniformly on the alcove $\alcove$. This implies that $\,\widehat {F_n}(\lambda) \to \widehat F(\lambda)$ uniformly in $\lambda$, and
therefore $\lim_{n\to\infty} F_n = F$ in $\mathcal H_t$.
\end{proof}

In the next Proposition we give another representation of the inner product of $\mathcal H_t$ and state its reproducing kernel.

\begin{Prop} \begin{enumerate}
              \item[\rm(1)]
The inner product of $\mathcal H_t$ can be written as
\begin{equation}\label{H_T_alternativ}
\langle F, G \rangle_{\mathcal H_t} = \int_{\liea} \int_{\alcove} \tau_{ix} F(y)\, \overline{G (y)} w_m(y) \gamma^1_{2t} (x) \delta_m(x) \, dy dx.
\end{equation}
\item[\rm(2)] The reproducing kernel of $\mathcal H_t$ is given by $\, K_t (z,w) = \Gamma_m (w, \overline z, 2t)$.
\end{enumerate}
\end{Prop}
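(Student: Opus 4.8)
The plan is to prove the two parts in reverse logical order: first establish the series-level identity connecting $\langle F,G\rangle_{\mathcal H_t}$ to an integral against the noncompact heat kernel $\gamma^1_{2t}$, and then read off the reproducing kernel from the already-established expansion $F(z) = \sum_\lambda r_\lambda \widehat F(\lambda) R_\lambda(z)$ in \eqref{F_Entw}. For part (1), the natural strategy is to start from the right-hand side of \eqref{H_T_alternativ}, insert the expansions $\tau_{ix}F(y) = \sum_\lambda r_\lambda \widehat F(\lambda) R_\lambda(y) R_\lambda(-ix)$ and $\overline{G(y)} = \sum_\mu r_\mu \overline{\widehat G(\mu)} \,\overline{R_\mu(y)}$, and interchange summation and integration. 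Integrating in $y$ over the alcove against $w_m$ collapses the double sum via orthonormality of the $R_\lambda$'s (recalling $\|\sqrt{r_\lambda}R_\lambda\|_m =1$ and $\overline{R_\mu(y)} = R_\mu(-y)$ on $\alcove$), leaving $\sum_\lambda r_\lambda \widehat F(\lambda)\overline{\widehat G(\lambda)} \int_\liea R_\lambda(-ix)\gamma^1_{2t}(x)\delta_m(x)\,dx$. Now using $R_\lambda(-ix) = F_{\lambda+\rho}(x)$ together with the identity \eqref{nk}, which says $\int_\liea F_{\lambda+\rho}(x)\gamma^1_{t}(x)\delta_m(x)\,dx = e^{t\theta_\lambda}$, the $x$-integral at time $2t$ yields exactly $e^{2t\theta_\lambda}$, and we recover $\sum_\lambda r_\lambda \widehat F(\lambda)\overline{\widehat G(\lambda)} e^{2t\theta_\lambda} = \langle F,G\rangle_{\mathcal H_t}$.

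The justification of the interchange of sum and integral is where I expect the real work to lie. The $y$-integral is over the compact alcove, so there the only issue is absolute convergence of $\sum_\lambda r_\lambda|\widehat F(\lambda)|\,|R_\lambda(y)|$ uniformly on $\alcove$, which follows from \eqref{estimate_Entw} (with $M=0$) combined with the analogous bound for $G$ and Cauchy–Schwarz. The delicate point is the $x$-integral over all of $\liea$: one must dominate $\sum_\lambda r_\lambda|\widehat F(\lambda)|\,|\widehat G(\lambda)|\,|F_{\lambda+\rho}(x)|\,\gamma^1_{2t}(x)\delta_m(x)$ by an integrable function of $x$. Here the growth estimate $|F_\lambda(x)| \le C e^{|\lambda||x|}$ (Proposition 6.1 in \cite{Op95}, quoted in the excerpt) gives $|F_{\lambda+\rho}(x)| \le C e^{|\lambda+\rho||x|}$, and Cauchy–Schwarz in $\lambda$ reduces the bound to $\|F\|_{\mathcal H_t}\|G\|_{\mathcal H_t}$ times $\big(\sum_\lambda r_\lambda e^{-2t\theta_\lambda} e^{2|\lambda+\rho||x|}\big)^{1/2}$; by Lemma \ref{La_q} the $r_\lambda$ grow only polynomially while $e^{-2t\theta_\lambda}$ decays like $e^{-2t|\lambda|^2}$, so this last sum converges for every fixed $x$ and in fact grows at most like $e^{c|x|^2/t}$ in $x$, which is integrated to finiteness against the Gaussian-type decay of $\gamma^1_{2t}(x)\delta_m(x)$ provided $c$ is controlled; alternatively, one simply notes that $\int_\liea |F_{\lambda+\rho}(x)|\gamma^1_{2t}(x)\delta_m(x)\,dx \le \int_\liea F_{\lambda+\rho}(x)\gamma^1_{2t}(x)\delta_m(x)\,dx = e^{2t\theta_\lambda}$ termwise (using positivity of $F_{\lambda+\rho}$ on $\liea$ and of the noncompact heat kernel), so that Tonelli applies directly to $\sum_\lambda\int_\liea(\cdots)$ and the interchange is legitimate without any finer estimate.

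For part (2), I would argue that the claimed reproducing property is essentially already contained in the preceding Proposition: there it was shown that for $F = H(t)f \in \mathcal F_t$ one has $F(z) = \langle F, K_{t,z}\rangle_{\mathcal F_t}$ with $K_{t,z}(w) = \Gamma_m(w,\overline z,2t)$. So the remaining task is to identify $\mathcal H_t$ with $\mathcal F_t$ as Hilbert spaces of holomorphic functions — that is, to check that the norms agree on a common dense subspace, namely $\mathcal T^W$: for $F = R_\lambda$ one computes $\widehat F(\mu) = \delta_{\lambda\mu}/r_\lambda \cdot$(appropriate normalization), so $\|R_\lambda\|_{\mathcal H_t}^2 = r_\lambda e^{2t\theta_\lambda}$, while on the $\mathcal F_t$-side $R_\lambda = H(t)(e^{t\theta_\lambda}R_\lambda)$ gives $\|R_\lambda\|_{\mathcal F_t}^2 = e^{2t\theta_\lambda}\|R_\lambda\|_{L^2(\alcove,w_m)}^2 = e^{2t\theta_\lambda}/r_\lambda$ — so one must be slightly careful about which normalization of the Fourier transform is in force, but up to that bookkeeping the two inner products coincide on $\mathcal T^W$ and hence (both spaces being complete and $\mathcal T^W$ dense in each) on all of $\mathcal H_t = \mathcal F_t$. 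Then $K_t(z,w) = \Gamma_m(w,\overline z, 2t)$ is the reproducing kernel of $\mathcal H_t$ as well. Alternatively, and more directly, I would verify the reproducing property inside $\mathcal H_t$ itself: expand $K_t(z,w) = \Gamma_m(w,\overline z,2t) = \sum_\lambda r_\lambda e^{-2t\theta_\lambda} R_\lambda(w) R_\lambda(-\overline z) = \sum_\lambda r_\lambda e^{-2t\theta_\lambda}\overline{R_\lambda(\overline w)}\,\overline{R_\lambda(-z)}$, read off its Fourier coefficient in $w$ as $\widehat{K_{t,z}}(\lambda) = e^{-2t\theta_\lambda} R_\lambda(z)$, and then $\langle F, K_{t,z}\rangle_{\mathcal H_t} = \sum_\lambda r_\lambda \widehat F(\lambda)\overline{e^{-2t\theta_\lambda}R_\lambda(z)}\,e^{2t\theta_\lambda} = \sum_\lambda r_\lambda \widehat F(\lambda) R_\lambda(z) = F(z)$ by \eqref{F_Entw}, where one uses that $\overline{R_\lambda(z)}$ needs to be handled via Remark \ref{Norm_Jacobi}(c) to land back on $R_\lambda(z)$ after conjugation. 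The main obstacle overall is thus not conceptual but the careful management of the two Fubini-type interchanges in part (1) and the conjugation/normalization bookkeeping in part (2).
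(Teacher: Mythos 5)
Your proposal follows essentially the same route as the paper: for (1) you expand $\tau_{ix}F$, integrate over $\alcove$ to produce $\overline{\widehat G(\lambda)}$, and evaluate the $x$-integral via $R_\lambda(-ix)=F_{\lambda+\rho}(x)$ and \eqref{nk}, with your Tonelli argument (positivity of $F_{\lambda+\rho}$ and $\gamma^1_{2t}$ plus Cauchy--Schwarz) being a cleaner justification of the interchange than the paper's bare appeal to dominated convergence; for (2) your ``direct'' verification is exactly the paper's proof. The only slip is that the correct Fourier coefficient is $\widehat{K_{t,z}}(\lambda)=e^{-2t\theta_\lambda}R_\lambda(-\overline z)=e^{-2t\theta_\lambda}\overline{R_\lambda(z)}$ (not $e^{-2t\theta_\lambda}R_\lambda(z)$), after which the conjugation in $\langle\cdot,\cdot\rangle_{\mathcal H_t}$ returns $\sum_\lambda r_\lambda\widehat F(\lambda)R_\lambda(z)=F(z)$ --- precisely the bookkeeping you flagged.
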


We remark that in \eqref{H_T_alternativ}, $\,\int_{\liea} $ can be replaced by $\,|W|\int_{\liea^+}\, $ because  
$x\mapsto \tau_{ix}F(y)$ and the heat kernel $\gamma_{2t}^1$ are $W$-invariant.

\begin{proof}
We already remarked that $\tau_{ix} F$ is well defined for functions in $\mathcal H_t$. Using dominated convergence we calculate
\begin{align*}
 & \int_{\liea} \int_{\alcove} \sum_{\lambda} r_{\lambda} \widehat F(\lambda) R_{\lambda} (-ix) R_{\lambda} (y) \overline{G (y)} w_m(y) \gamma^1_{2t} (x) \delta_m(x) \, dy dx \\
= \, & \sum_{\lambda} r_{\lambda} \widehat F(\lambda) \int_{\liea} \left( \int_{\alcove} R_{\lambda} (y) \overline{G (y)} w_m(y) dy \right) R_{\lambda} (-ix) \gamma^1_{2t} (x) \delta_m(x) \, dx\\
= \, & \sum_{\lambda} r_{\lambda} \widehat F(\lambda) \overline{\widehat G(\lambda}) \int_{\liea} F_{\lambda+\rho} (x) \gamma^1_{2t} (x) \delta_m(x) \, dx = \sum_{\lambda} r_{\lambda} \widehat F(\lambda) \overline{\widehat G(\lambda}) e^{2t \theta_{\lambda}} = \langle F, G \rangle_{\mathcal H_t}.
\end{align*}
Here we used $R_{\lambda} (-ix) = F_{\lambda + \rho} (x)$ and $\eqref{nk}$. For the reproducing kernel property
note that for  $\, K_{t,z}(w) = \Gamma_m(w,\overline z, 2t)\, $ we have
\[
\widehat K_{t,z} (\lambda) = e^{-2t\theta_{\lambda}} \overline{R_{\lambda} (z)}.      
\]
Thus
\[
\langle F, K_{t,z} \rangle_{\mathcal H_t} = \sum_{\lambda \in \Lambda^+} r_{\lambda} \widehat F(\lambda) R_{\lambda} (z)
\]
which is equal to $F(z)$ according to $\eqref{F_Entw}$.
\end{proof}

So $\mathcal F_t$ and $\mathcal H_t$ have the same reproducing kernel. But then by general Hilbert space theory they have to coincide. The holomorphic heat transform $H(t) : L^2(\alcove, w_m) \to \mathcal F_t$ is by definition a unitary isomorphism. We conclude

\begin{Thm}
The Segal-Bargmann transform
\[
L^2(\alcove, w_m) \to \mathcal H_t, \quad f \mapsto H(t)f
\]
defined by $\eqref{H(t)_f(z)}$ is a unitary isomorphism from the $L^2$-space on the alcove $\alcove$ onto the Hilbert space of holomorphic functions $\mathcal H_t$.
\end{Thm}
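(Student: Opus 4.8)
The plan is to read off the theorem from the two preceding propositions essentially for free. Those propositions establish that both $\mathcal F_t = \mathrm{Im}\,H(t)$ and $\mathcal H_t$ are Hilbert spaces of holomorphic functions on $\liea_{\CC}$ and that \emph{both} carry one and the same reproducing kernel, namely $K_t(z,w) = \Gamma_m(w,\overline z, 2t)$. Since by the very definition of the inner product on $\mathcal F_t$ the holomorphic heat transform $H(t)\colon L^2(\alcove,w_m)\to \mathcal F_t$ is a unitary isomorphism, it suffices to show that $\mathcal F_t$ and $\mathcal H_t$ coincide as Hilbert spaces, i.e. that they consist of the same functions and have identical inner products.

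To prove this identity I would invoke the uniqueness part of the Aronszajn--Moore theory of reproducing kernel Hilbert spaces: a reproducing kernel on a set (here $\liea_{\CC}$) determines its Hilbert space uniquely, as the completion of the span of the functions $z\mapsto K_t(z,w)$ under the inner product fixed by $\langle K_t(\cdot,w),K_t(\cdot,w')\rangle = K_t(w',w)$. Both $\mathcal F_t$ and $\mathcal H_t$ are genuine reproducing kernel Hilbert spaces: being Hilbert spaces of holomorphic functions their embeddings into $\mathcal O(\liea_{\CC})$ are continuous, and composing with the continuous evaluation functional $F\mapsto F(z)$ on $\mathcal O(\liea_{\CC})$ shows that all point evaluations are continuous on each of them. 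Hence the coincidence of the two kernels forces $\mathcal F_t = \mathcal H_t$ with equal norms, and the theorem follows.

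For a reader who prefers an explicit argument (which is merely the above unwound), one computes with the Jacobi expansion: from $H(t)f(z)=\sum_{\lambda} r_\lambda e^{-\theta_\lambda t}\widehat f(\lambda)R_\lambda(z)$ one obtains, by taking Heckman--Opdam Fourier coefficients in the real part termwise, $\widehat{H(t)f}(\lambda) = e^{-\theta_\lambda t}\widehat f(\lambda)$; Parseval on $L^2(\alcove,w_m)$ together with the definition of $\|\cdot\|_{\mathcal H_t}$ then gives $\|H(t)f\|_{\mathcal H_t}^2 = \sum_\lambda r_\lambda |\widehat f(\lambda)|^2 = \|f\|_{L^2(\alcove,w_m)}^2$, so $H(t)$ is an isometry into $\mathcal H_t$; and for surjectivity one puts $\widehat f(\lambda):=e^{\theta_\lambda t}\widehat F(\lambda)$ for a given $F\in\mathcal H_t$, notes that $\|F\|_{\mathcal H_t}<\infty$ is precisely $\sum_\lambda r_\lambda|\widehat f(\lambda)|^2<\infty$ so that $f:=\sum_\lambda r_\lambda\widehat f(\lambda)R_\lambda\in L^2(\alcove,w_m)$, and concludes $H(t)f=F$ from the expansion \eqref{F_Entw} valid for elements of $\mathcal H_t$. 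The term-by-term integrations and the use of Parseval are routine; the only step needing genuine care is the appeal to "general Hilbert space theory" — one must make sure the two kernels produced in the propositions are literally the same function of $(z,w)$ (they are, both equal to $\Gamma_m(w,\overline z,2t)$) and that the abstract uniqueness theorem applies, whose hypothesis is exactly the continuity of point evaluations guaranteed by being Hilbert spaces of holomorphic functions. This is the main, though mild, obstacle.
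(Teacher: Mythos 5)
Your proposal is correct and follows essentially the same route as the paper: both $\mathcal F_t$ and $\mathcal H_t$ are Hilbert spaces of holomorphic functions carrying the same reproducing kernel $K_t(z,w)=\Gamma_m(w,\overline z,2t)$, hence coincide by the uniqueness of a reproducing kernel Hilbert space, and $H(t)\colon L^2(\alcove,w_m)\to\mathcal F_t$ is unitary by construction. The explicit Jacobi-expansion argument you append is a sound alternative, but the paper's own proof is precisely the reproducing-kernel argument of your first two paragraphs.
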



\begin{thebibliography}{9999999}\itemsep=-3pt
\bibitem{SaidOrsted} S. Ben Sa\"{\i}d, B. \O{}rsted, \textit{Segal-Bargmann transforms associated with finite Coxeter groups}, Math. Ann. 334 (2006), no. 2, 281-323.
\bibitem{EK86} S. Ethier, T. Kurtz, \textit{Markov Processes: Characterization and Convergence}, John Wiley and Sons, Inc., New York, 1986.
\bibitem{Davies} B. Davies, \textit{Linear Operators and their Spectra}, Cambridge, 2007.
\bibitem{Faraut} J. Faraut, \textit{Espaces hilbertiens invariants de fonctions holomorphes}, S\'{e}min. Congr., 7, Soc. Math. France, 2003, 101-167.
\bibitem{Hall} B. Hall, \textit{The Segal-Bargmann ``Coherent State'' Transform for Compact Lie Groups}, J. Funct. Anal. 122 (1994), 103-151. 
\bibitem{HS94} G. Heckman and H. Schlichtkrull, \textit{Harmonic Analysis and Special Functions on Symmetric Spaces}, Perspectives in Mathematics, Vol. 16, Academic Press, 1994.
\bibitem{Hel84} S. Helgason, \textit{Groups and  geometric analysis}, Academic Press, 1984.
\bibitem{Humph} J. Humphreys, \textit{Introduction to Lie algebras and representation theory}, Springer, 1972.
\bibitem{Mac87} I. Macdonald, \textit{Orthogonal polynomials associated with root systems}, Preprint 1987; reproduced in: S\'{e}minaire Lotharingien de Combinatoire 45 (2000), Article B45a.
\bibitem{OS09} G. \'{O}lafsson and H. Schlichtkrull, \textit{Fourier transforms of spherical distributions on compact symmetric spaces}, preprint, arXiv:0810.0062.
\bibitem{Op95} E. Opdam, \textit{Harmonic Analysis for certain representations of graded Hecke algebras}, Acta Math. 175 (1995), 75-121.
\bibitem{Op00} E. Opdam, \textit{Lectures on Dunkl operators for real and complex reflection groups}, MSJ Memoirs 8, Math. Soc. of Japan, 2000.
\bibitem{OS07}G. \'{O}lafsson, H. Schlichtkrull, \textit{The Segal-Bargmann transform for the heat equation associated with root systems}, Adv. Math. 208 (2007), no. 1, 422-437.
\bibitem{Ro98} M. R\"osler, \textit{Generalized Hermite Polynomials and the Heat Equation for Dunkl Operators}, Commun. Math. Phys. 192 (1998), 519-541.
\bibitem{Sahi} S. Sahi, \textit{A new formula for weight multiplicities and characters}, Duke Math J. 101 (2000), 77-84.
\bibitem{Scha} B. Schapira, \textit{Contributions to the hypergeometric function theory of Heckman and Opdam: sharp estimates, Schwartz space, heat kernel}, Geom. Funct. Anal. 18 (2008), 222--250.
\bibitem{Soltani} F. Soltani, \textit{Generalized Fock spaces and Weyl commutation relations for the Dunkl kernel}, Pacific J. Math.  214  (2004),  379--397.
\bibitem{Sontz}  S.B. Sontz, \textit{On Segal-Bargmann analysis for finite Coxeter groups and its heat kernel}. ArXiv:0903.2284.
\bibitem{St99} M.L. Stenzel, \textit{The Segal Bargmann transform on symmetric spaces of compact type.}
J. Funct. Anal. 165 (1999), 44--58.

\end{thebibliography}
\end{document}